\newtheorem{thm}{Theorem}[section]
 \newtheorem{cor}[thm]{Corollary}
 \newtheorem{lem}[thm]{Lemma}
 \newtheorem{prop}[thm]{Proposition}
\numberwithin{equation}{section}
 \theoremstyle{definition}
  \newtheorem{defn}[thm]{Definition}
  \newtheorem{question}[thm]{Question}
 \theoremstyle{remark}
 \newtheorem{rem}[thm]{Remark}
  \newtheorem{ex}[thm]{Example}
\def\lp{\ell^p_E(X)}
\def\lo{\ell^1_E(X)}
\def\lz{\ell^0_E(X)}
\def\linf{\ell^\infty_E(X)}
\def\Blp{\mathfrak{B}(\ell^p_E(X))}
\def\B{\mathfrak B}
\def\Dom{\{0\} \cup [1,\infty]}
\def\dom{\{0\} \cup [1,\infty)}
\def\VL{\mathrm{VL}(X)}
\def\VLinf{\mathrm{VL}_\infty(X)}
\def\Id{\mathrm{Id}}
\def\Roe{\mathrm{Roe}(X,B)}
\def\K{\mathcal{K}(X,B)}
\def\supp{\mathrm {supp}}
\def\andx{\quad\mbox{~and~}\quad}
\def\N{\mathcal N}
\def\Commut{\mathrm{Commut}}
\def\diam{\mathrm{diam}}
\begin{document}

\title[Quasi-Locality]{Quasi-Locality and Property A}

\author{J\'{a}n \v{S}pakula and Jiawen Zhang}
\address{School of Mathematics, University of Southampton, Highfield, SO17 1BJ, United Kingdom.}
\email{jan.spakula@soton.ac.uk, jiawen.zhang@soton.ac.uk}

\date{}
%\subjclass[2010]{20F65, 20F67, 20F69}
\keywords{Quasi-local operators, $\ell^p$-Roe-like algebras, Property A}

\thanks{J\v{S} was supported by Marie Curie FP7-PEOPLE-2013-CIG Coarse Analysis (631945).
JZ was supported by the Sino-British Trust Fellowship by Royal Society.}

% orig from Jiawen
%\baselineskip=16pt

\begin{abstract}
    Let $X$ be a metric space with bounded geometry, $p\in\Dom$, and let $E$ be a Banach space. The main result of this paper is that either if $X$ has Yu's Property A and $p\in(1,\infty)$, or without any condition on $X$ when $p\in\{0,1,\infty\}$, then quasi-local operators on $\ell^p(X,E)$ belong to (the appropriate variant of) Roe algebra of $X$.
    %This generalises the existing results of this type, namely these of Lange and Rabinovich (when $X=\mathbb{Z}^N$), Engel (essentially when $X$ has polynomial growth and $p=2$), Tikuisis and \v{S}pakula (when $X$ has straight finite decomposition complexity and $p=2$), and Li, Wang and Zhang (when $X$ has straight finite decomposition complexity and $p\in[1,\infty)$).
    This generalises the existing results of this type by Lange and Rabinovich, Engel, Tikuisis and the first author, and Li, Wang and the second author. As consequences, we obtain that uniform $\ell^p$-Roe algebras (of spaces with Property A) are closed under taking inverses, and another condition characterising Property A, akin to operator norm localisation for quasi-local operators.
\end{abstract}
\date{\today}
\maketitle

\parskip 4pt

\noindent\textit{Mathematics Subject Classification} (2010): 20F65, 46H35, 46J40, 47L10.\\

\section{Introduction}\label{sec:intro}

Roe algebras are C*-algebras associated to metric spaces (or more generally spaces endowed with a coarse structure), which encode their large scale structures. The interest to study these objects stems from two sources: index theory on open manifolds as well as the Novikov conjecture, and operator theory of band-dominated operators. Recently, much interest has arisen in investigating their $\ell^p$-variants as well (see \cite{chung2018rigidity,LWZ2018}).

On the index theory side, the $K$-theory of Roe algebras serves as a receptacle for indices of (for instance) geometric differential operators; this is how they originally appeared in the pioneering work of John Roe \cite{roe1988:index-thm-on-open-mfds,roe1996:index-theory-CBMS}. This landscape, framed by various versions of the coarse Baum--Connes conjecture, has shown its power to produce significant results in geometry and topology, for instance for the Novikov conjecture and the bounded Borel rigidity conjecture (see e.g.~\cite{guent-tess-yu2012:Borel-stuff,yu1998:Novikov-for-FAD,yu2000:CBC-for-CE}), and about the non-existence of metrics of positive scalar curvature on open Riemannian manifolds (see e.g.~\cite{hanke-pape-schick2015,schick2014:ICM,yu1997:0-in-the-spec-PSC}).

On the single operator theory side, the elements of Roe algebras are referred to as \emph{band-dominated} operators, and substantial work has been done for instance on their Fredholmness and essential spectrum (see e.g.~\cite{lindner2006:inf-matr-book,rabinovich2012limit} and references therein). More recently, the coarse geometric ideas have been recognised as useful in this area as well (see e.g.~\cite{hagger-lindner-seidel2016,lindner-seidel2014:core-issue,seidel2014:survey,vspakula2017metric}). These ideas have been applied for instance to Hamiltonians of quantum systems and Schr\"{o}dinger operators \cite{georgescu2011:ess-spec-on-metric-spaces,georgescu2017:ess-spec-ell-diffops,georgescu-iftimovici2006}.

Roe algebras have various versions in studying different problems (for instance, uniform or stable versions). To work in the full generality, we choose the setup of $\ell^p$-Roe-like algebras, which is, roughly speaking, defined as the norm closure of some algebra of operators with \emph{finite propagation} (also known as \emph{band operators} in the operator theory language). This raises the question whether we can recognise exactly which operators belong to an $\ell^p$-Roe-like algebra, rather than producing a sequence of finite propagation approximants --- in some situations, for example when considering index theory of pseudo-differential operators (see e.g. \cite{engel2015index,engel2015rough}), this may not always be possible.

To outline the main result of this paper, let us explain the notions in the case when $(X,d)$ is a metric space with bounded geometry (for precise definitions see Section \ref{sec:preliminaries}), $p=2$, and the Banach space $E$ appearing later is just $\mathbb{C}$. Thinking of operators on $\ell^2(X)$ as $X$-by-$X$ matrices, we say that such an operator has \emph{finite propagation} (or is a \emph{band operator}), if the non-zero entries appear only in a band of finite width (measured by the metric on $X$) around the main diagonal. (See Definition \ref{def:propagation} for full details.) The finite propagation operators form a *-subalgebra of $\B(\ell^2(X))$, and its closure is called the \emph{uniform Roe algebra} of $X$. Any finite propagation operator $a$ has the property that there exists $R\geq 0$, such that for any $A,B\subset X$ with $d(A,B)\geq R$, we have $\chi_Aa\chi_B=0$ (here we denote by $\chi_A$ the characteristic function of $A$, acting as a multiplication operator on $\ell^2(X)$).
% One can visualise the operator $\xi_Aa\chi_B$ as a ``block'' of the $X$-by-$X$ matrix of $a$, which is at least $d(A,B)$ away from the main diagonal.
One possible weakening of this condition is to ask that for any $\varepsilon>0$ there exists $R\geq0$, such that for any $A,B\subset X$ with $d(A,B)\geq R$ one has $\|\chi_Aa\chi_B\|<\varepsilon$. Operators $a$ satisfying this condition are called \emph{quasi-local}; and it is immediate (by an approximation argument), that any operator in the uniform Roe algebra of $X$ is indeed quasi-local.

The converse, namely the question whether any quasi-local operator belongs to the corresponding Roe algebra, is the main question this paper addresses. Note that a positive answer provides a method for checking whether an operator belongs to a Roe-like algebra merely by estimating norms of off-diagonal block restrictions of the operator (i.e. $\chi_Aa\chi_B$) and does not require producing finite propagation approximants. This is crucial, for instance, for the work of Engel \cite{engel2015index,engel2015rough} on the index theory of pseudo-differential operators. To expand on this point, Engel \cite[Section 2]{engel2015index} points out that while indices of genuine differential operators on Riemannian manifolds live in the (appropriate) Roe algebra, the indices of uniform pseudo-differential operators are only known to be quasi-local.

Furthermore, a positive answer to this question is also used in the work of White and Willett \cite{white-willett2018:cartans} on classifying Cartan subalgebras of Roe algebras, with some applications on the associated rigidity problem as well.

The history of this question can be traced back to the work of Lange and Rabinovich \cite{lange1985noethericity}, where an affirmative answer is obtained in the case $X=\mathbb{Z}^N$ using the Fourier transform. (This fact has been then used extensively in the band-dominated operator theory for studying Fredholmness, see e.g.~\cite{rabinovich2012limit} for an overview.) Subsequently, it appears in Roe \cite[Remark on page 20]{roe1996:index-theory-CBMS}. The recent progress in $p=2$ case includes the result of Engel \cite{engel2015rough} for the case when $X$ is a manifold of bounded geometry with polynomial volume growth, and the result of Tikuisis and the first author \cite{spakula2017relative}, where the authors assume that $X$ has straight finite decomposition complexity (but note that $X$ is only required to be proper, not necessarily of bounded geometry). This work has been generalised by Li, Wang and the second author to general $p\in[1,\infty)$ \cite{LWZ2018} under the same assumption on the space $X$.

The main result of this paper provides an affirmative answer to this question for any metric space $X$ with bounded geometry in the following cases: without any further assumption on $X$ for $p\in \{0,1,\infty\}$; or when $X$ has Property A and $p\in(1,\infty)$.
For the former case, our proof is inspired by the recent work on limit operator theory \cite{zhang2018}. For the latter case, we utilise the key technical trick from \cite{LWZ2018,spakula2017relative} (cf.~Proposition \ref{estimate for commut}), but the remainder of our argument is of a very different nature than that of \cite{LWZ2018,spakula2017relative}: We do not inductively chop a quasi-local operator (as was done using straight finite decomposition complexity) to eventually produce an approximant of finite propagation. Instead, we prove and use a property similar to the operator norm localisation property \cite{chen2008metric}, but for quasi-local operators (cf.~Lemma \ref{ONL}).

We note that our result generalises (for metric spaces with bounded geometry) all the previous ones: both straight finite decomposition complexity and polynomial growth (separately) imply Property A. For the former, see \cite{dranishnikov2014asymptotic}; for the latter this is a result of Tu \cite{tu2001remarks}. For completeness, let us note that for finitely generated groups, polynomial growth implies finite asymptotic dimension (using Gromov's theorem \cite{gromov1981groups} one concludes that these groups are virtually nilpotent, so that a result of Bell and Dranishnikov applies \cite{bell2008asymptotic}). This in turn implies finite decomposition complexity \cite{dranishnikov2014asymptotic,guent-tess-yu2013:fdc}. However we are not aware of an argument that would assert that polynomial growth implies (straight) finite decomposition complexity for general metric spaces.

We present two direct applications of our work: We observe that the conclusion of Lemma \ref{ONL} for $p=2$ is actually equivalent to Property A (and hence to the operator norm localisation property and the metric sparsification property \cite{brodzki2013uniform,chen2008metric,sako2014property}). Thus we obtain yet another characterisation of Property A. The second application is that $\ell^p$-Roe-like algebras are closed under taking inverses (in $\B(\ell^p_E(X))$). While this is immediate when $p=2$ (the C*-algebra case), for the remaining $p$ this requires an argument (cf.~a discussion of this property in the context on band-dominated operator theory \cite{rabinovich2012limit}).

The paper is structured as follows:
After recalling the background notations, terminology and some existing results and examples in Section \ref{sec:preliminaries}, we state and discuss the main result in Section \ref{sec:mainthm}. Our proof of the main result requires treating substantial portions of the two cases ($p\in\{0,1,\infty\}$ and $p\in(1,\infty)$) differently, so it is split up along this seam into Sections \ref{sec:proof-01infty} and \ref{sec:p-in-1-infty}. The paper closes with applications and open questions in Section \ref{sec:apps-and-qs}.

\emph{Acknowledgments.} We would like to thank Kang Li and Rufus Willett for comments and discussions relating to this piece, and Yeong Chyuan Chung for several helpful comments after reading an early draft of this paper.

\section{Preliminaries}\label{sec:preliminaries}

\subsection{Notation and conventions}
For a metric space $(X,d)$, $x \in X$ and $R \geq 0$, denote by $B(x,R)$ the closed ball in $X$ with radius $R$ and centre $x$. We say that $X$ has \emph{bounded geometry}, if for any $R$, the number $\sup_{x\in X} \sharp B(x,R)$ is finite, where the notation $\sharp S$ denotes the cardinality of the set $S$.

Given $A\subset X$ and $K\geq0$ we denote the $K$-neighbourhood of $A$ by $\N_K(A)=\{x\in X\mid d(x,A)\leq K\}$. Furthermore, we shall use $\chi_A:X\to\{0,1\}$ to denote the characteristic function of the set $A$.

We shall also use the notation $C_b(X)$ for the C*-algebra of bounded continuous functions on $X$ endowed with the sup-norm; of course under the standing assumption that $X$ has bounded geometry, this is nothing else than $\ell^\infty(X)$. Nevertheless, we will use this notation to emphasise when we are considering these functions on $X$ as (soon to be) acting as multiplication operators on some Banach space. Moreover, the Roe-like algebras and several other notions we refer to make sense and are useful for ``continuous" spaces (e.g. manifolds) as well. The convenience of having a discrete space lies, for instance, in the fact that any characteristic function sits inside $C_b(X)$.

Let $E$ be a Banach space. We denote the closed unit ball of $E$ by $E_1$.
% For any $a,b \in E$ and $\varepsilon >0$, we denote $\|a-b\| \leq \varepsilon$ by $a\approx_\varepsilon b$.
We denote the bounded linear operators on $E$ by $\B(E)$, and the compact operators on $E$ by $\mathcal{K}(E)$. Moreover, we define
$$E_\infty := \ell^\infty(\mathbb N, E)\big/ \big\{(a_n)_{n\in \mathbb N} \in \ell^\infty(\mathbb N, E): \lim_{n\rightarrow \infty}\|a_n\|=0\big\},$$
which is a Banach space, equipped with the quotient norm
$$\big\|[(a_n)_{n\in \mathbb N}]\big\|:=\limsup_{n \rightarrow \infty}\|a_n\|.$$

\emph{Throughout the paper, $(X,d)$ shall stand for a metric space with bounded geometry, and $E$ for a Banach space.}

\subsection{Banach space valued $\ell^p$-spaces and block cutdowns}
We start with the following notions of Banach space valued $\ell^p$-spaces:
\begin{itemize}
  \item $\lp:=\ell^p(X;E)$ for $p\in [1,\infty)$, which denotes the Banach space of $p$-summable functions from $X$ to $E$ with respect to the counting measure;
  \item $\linf:=\ell^\infty(X;E)$, which denotes the Banach space of bounded functions from $X$ to $E$;
  \item $\lz:=c_0(X;E)$, which denotes the Banach space of continuous functions from $X$ to $E$ vanishing at infinity.
\end{itemize}

For $p \in \Dom$, $\rho: C_b(X) \to \B(\lp)$ is a representation defined by point-wise multiplication. To simplify the notation, we write $f \xi$ instead of $\rho(f)(\xi)$ for $f \in C_b(X)$ and $\xi \in \lp$. For any $F \subseteq X$, denote $\rho(\chi_F)$ by $P_F$. Also recall a net $\{a_i\}$ converges in \emph{strong operator topology (SOT)} to $a$ in $\Blp$ if and only if $\|a_i(\xi)-a(\xi)\|_p \rightarrow 0$ for any $\xi \in \lp$. To deal with the case of $p=\infty$, we need another topology on $\B(\linf)$ as follows.
\begin{defn}
A net $\{a_i\}$ converges \emph{point-wise strongly} to $a$ in $\B(\linf)$, if for any $v \in \linf$, $a_iv$ converges point-wise to $av$ in $\linf$.
\end{defn}

% To explore how decompositions of the space induce those of the associated Banach algebras, we introduce the notion of block cutdown. Literally, we cut an operator into a form of block diagonal.

We shall recall several notions from \cite{LWZ2018,spakula2017relative}, keeping the notation consistent with these sources for the convenience. The first of these is the notion of a block cutdown of an operator, whose purpose is to literally cut most of the operator away, keeping only a collection of diagonal blocks.

\begin{defn}\label{block cut down}
Given a family $(e_j)_{j \in J}$ of positive contractions in $C_b(X)$ with pairwise disjoint supports and $p \in \Dom$, define the \emph{block cutdown} map $\theta_{(e_j)_{j \in J}}: \Blp \rightarrow \Blp$ by
$$\theta_{(e_j)_{j \in J}}(a):=\sum_{j \in J}e_jae_j,$$
which converges strongly when $p \in \dom$ (by \cite[Lemma 2.4]{LWZ2018}), and point-wise strongly when $p=\infty$ (by definition). A subalgebra $B \subseteq \Blp$ is \emph{closed under block cutdowns}, if for $a \in B$, any of its block cutdowns belongs to $B$.
\end{defn}

\begin{rem}
Note that the multiplication by $C_b(X)$ commutes with the block cutdowns, i.e., for any $a \in \B(\lp)$ and $f \in C_b(X)$, we have
$$f\theta_{(e_j)_{j \in J}}(a)=\theta_{(e_j)_{j \in J}}(fa) \andx \theta_{(e_j)_{j \in J}}(a)f=\theta_{(e_j)_{j \in J}}(af).$$
Also notice that
$$\big\|\sum_{j \in J}e_jae_j\big\|=\sup_{j \in J}\|e_jae_j\|.$$
\end{rem}

\subsection{$\ell^p$-Roe-like algebras}

% Here we introduce our main objects: Roe-like algebras. Let us start with some basic notions on propagations and quasi-locality.
\begin{defn}\label{def:propagation}
Let $R \geq 0$, $p \in \Dom$ and $a \in \Blp$. Denote the support of a function $f \in C_b(X)$ by $\supp(f)$. We say:
\begin{itemize}
  \item $a$ has \emph{propagation at most $R$}, if for any $f,f' \in C_b(X)$ with $d(\supp(f),\supp(f')) > R$, then $faf'=0$.
  \item $a$ has \emph{$\varepsilon$-propagation at most $R$} for some $\varepsilon >0$,  if for any $f,f' \in C_b(X)_1$ with $d(\supp(f),\supp(f')) > R$, then $\|faf'\|< \varepsilon$.
  \item $a$ is \emph{quasi-local}, if for every $\varepsilon>0$, it has finite $\varepsilon$-propagation.
\end{itemize}
\end{defn}
Note that finite propagation operators are also called band operators in operator theory literature including \cite{rabinovich2012limit,roe2005band}. We recall the following two basic examples of operators with finite propagation:
\begin{ex}\label{mul and pt}
\begin{enumerate}
  \item Let $f: X \to \B(E)$ be a bounded function. Then the operator $A \in \Blp$ defined by $(A\xi)(x):=f(x)(\xi(x))$ for any $\xi \in \lp$ and $x\in X$, has propagation 0, called a \emph{multiplication operator}.
  \item Let $D,R$ be subsets of $X$, and $t:D \to R$ be a bijection such that $\sup_{x\in D}d(x,t(x))$ is finite. Define an operator $V$ by:
      \begin{equation*}
            (V\xi)(x)=
            \begin{cases}
              ~\xi(t^{-1}(x)), & x\in R, \\
              ~0, & \mbox{otherwise},
            \end{cases}
        \end{equation*}
      where $\xi \in \lp$ and $x\in X$. Then $V$ is a finite propagation operator, called a \emph{partial translation operator}.
\end{enumerate}
\end{ex}

The above two classes of operators indeed generate the set of all finite propagation operators as an algebra. More precisely, we have:
\begin{lem}[Lemma 2.4, \cite{vspakula2017metric}]\label{dec of BO}
Let $A$ be an operator on $\lp$ with propagation at most $r$, and $N=\sup_{x\in X} \sharp B(x,r)$. Then there exist multiplication operators $f_1,\ldots,f_N$ with $\|f_k\| \leq \sup_{x,y}\|A_{xy}\|$, and partial translation operators $V_1,\ldots,V_N$ of propagation at most $r$ such that:
$$A=\sum_{k=1}^N f_kV_k.$$
\end{lem}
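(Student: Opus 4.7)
The plan is to view $A$ as an $X$-by-$X$ ``matrix'' with entries in $\B(E)$, split its support set into $N$ partial matchings via König's edge-colouring theorem, and assemble the decomposition from these.

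First I would make sense of the matrix entries. For each $y\in X$ and $v\in E$, consider the elementary vector $\delta_y^v\in\lp$ which is $v$ at $y$ and $0$ elsewhere. Since $A$ has propagation at most $r$, the vector $A\delta_y^v$ is supported in $B(y,r)$, a finite set (bounded geometry), so for each $x\in B(y,r)$ the formula $A_{xy}(v):=(A\delta_y^v)(x)$ defines an operator $A_{xy}\in\B(E)$; we set $A_{xy}=0$ when $d(x,y)>r$. Let
\[ S:=\{(x,y)\in X\times X : A_{xy}\neq 0\}\subseteq\{(x,y) : d(x,y)\leq r\}. \]
Bounded geometry ensures that each horizontal slice $\{x : (x,y)\in S\}$ and each vertical slice $\{y : (x,y)\in S\}$ has at most $N=\sup_{x\in X}\sharp B(x,r)$ elements.

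Next, view $S$ as the edge set of a bipartite graph on two disjoint copies of $X$ (sources and targets), joining $y_{\mathrm{src}}$ to $x_{\mathrm{tgt}}$ whenever $(x,y)\in S$. By the previous paragraph this graph has maximum vertex degree at most $N$, so König's edge-colouring theorem gives a partition $S=M_1\sqcup\cdots\sqcup M_N$ into matchings. Each $M_k$ is the graph of a partial bijection $t_k\colon D_k\to R_k$ of subsets of $X$, and since every edge of $M_k$ satisfies $d(x,y)\leq r$, we have $\sup_{y\in D_k}d(y,t_k(y))\leq r$. This $t_k$ produces a partial translation operator $V_k$ of propagation at most $r$ as in Example \ref{mul and pt}(2). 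I would then define the multiplication operators $f_k$ by the fibre formula $f_k(x):=A_{x,t_k^{-1}(x)}\in\B(E)$ for $x\in R_k$ and $f_k(x):=0$ otherwise; the norm bound $\|f_k\|\leq\sup_{x,y}\|A_{xy}\|$ is immediate.

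To close, a direct computation on the generators $\delta_y^v$ shows that $(f_kV_k\delta_y^v)(x)=A_{xy}(v)$ when $(x,y)\in M_k$ and vanishes otherwise; summing over $k$ and using that the $M_k$ partition $S$ yields $A\delta_y^v=\sum_k f_kV_k\delta_y^v$, and hence $A=\sum_{k=1}^N f_kV_k$ (on the dense subspace of finitely supported vectors, and then everywhere by density when $p\in[1,\infty)$; for $p\in\{0,\infty\}$ one verifies equality point-wise).

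The only real obstacle is the application of König's edge-colouring theorem to a possibly infinite bipartite graph. The finite case is classical; the extension to infinite graphs of bounded finite maximum degree is a standard compactness reduction (e.g.\ De Bruijn--Erd\H{o}s), so no serious new work is needed there. Everything else is bookkeeping with matrix entries.
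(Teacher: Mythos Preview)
The paper does not actually prove this lemma; it is quoted verbatim from \cite{vspakula2017metric} (Lemma~2.4 there) and used as a black box in the proof of Lemma~\ref{bd to commut}. So there is no in-paper proof to compare against.

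That said, your argument is correct and is the standard one: decompose the support of the matrix of $A$ into $N$ matchings via K\H{o}nig's theorem, read off partial translations $V_k$ from the matchings and multiplication operators $f_k$ from the entries along each matching. The only point worth tightening is the final paragraph: for $p=\infty$ (and also to avoid invoking density at all), you can argue directly that for any $\xi\in\lp$ and $x\in X$, finite propagation gives $\chi_{\{x\}}A=\chi_{\{x\}}A\chi_{B(x,r)}$, so $(A\xi)(x)=\sum_{y\in B(x,r)}A_{xy}\xi(y)$ as a finite sum, and the same computation for $\sum_k f_kV_k$ matches termwise. This handles all $p\in\Dom$ uniformly without any density or pointwise-convergence appeal.
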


Now we introduce our main objects: $\ell^p$-Roe-like algebras.
\begin{defn}
Let $(X,d)$ be a metric space with bounded geometry and $p \in \Dom$. Suppose $E$ is a Banach space and $B$ is a Banach subalgebra in $\Blp$ such that $C_b(X)BC_b(X)=B$ and $B$ is closed under block cutdowns. Define:
\begin{enumerate}[(i)]
  \item $\Roe$ to be the norm-closure of all the operators in $B$ with finite propagation, which is called the \emph{$\ell^p$-Roe-like algebra of $X$}; and
  \item $\K$ to be the norm-closure of $C_0(X)BC_0(X)$ in $\Blp$.
\end{enumerate}
\end{defn}

Recall that the $\ell^2$-Roe-like algebras were introduced by Tikuisis and the first author \cite{spakula2017relative} with an additional condition
\begin{equation}\label{commut condition 2.2}
  [C_0(X),B] \subseteq \K,
\end{equation}
which is used in the proof of their main result. Subsequently, Li, Wang and the second author \cite{LWZ2018} showed condition (\ref{commut condition 2.2}) is not necessary (for any $p\in[1,\infty)$).
% They studied the case of $p \in [1,\infty)$ and proved similar results.
An obvious advantage without assuming the commutant condition in the setup is to allow more examples, especially in the case of $p=1$ as shown in \cite[Example 1.10]{phillips2013crossed} and Example \ref{uniform Roe alg} below as well. Furthermore, similar to the proof of \cite[Lemma 2.10]{LWZ2018}, we have:
\begin{lem}\label{ideal}
For any $p \in \Dom$, $\K$ is an ideal in $\Roe$.
\end{lem}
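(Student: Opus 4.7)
The plan is to verify the two inclusions $\Roe \cdot \K \subseteq \K$ and $\K \cdot \Roe \subseteq \K$ directly on norm-dense generating sets, and then appeal to continuity of multiplication in $\Blp$. On the $\Roe$ side the dense subset consists of finite propagation operators in $B$; on the $\K$ side one uses elementary products of the form $fbg$ with $f, g \in C_0(X)$ and $b \in B$. Because $X$ is discrete with bounded geometry (in particular proper), $C_0(X)$ is the norm-closure of the finitely supported functions $C_c(X)$, so after one more approximation I would assume that $f$ and $g$ have finite support.

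With this reduction in place, fix $a \in B$ of propagation at most $R$, and set $F := \supp(f)$ and $G := \supp(g)$, both finite. The core of the argument is the observation that finite propagation lets me cut off $a$ by a \emph{compactly supported} function. Indeed, by definition of propagation, $haf = 0$ for every $h \in C_b(X)$ with $\supp(h) \cap \N_R(F) = \emptyset$; taking $h = 1 - \chi_{\N_R(F)}$ (an element of $C_b(X)$) yields the key identity
$$af = \chi_{\N_R(F)} \cdot af.$$
Bounded geometry makes $\N_R(F)$ a finite set, so $\chi_{\N_R(F)} \in C_c(X) \subseteq C_0(X)$. A mirror-image computation gives $ga = ga \cdot \chi_{\N_R(G)}$ with $\chi_{\N_R(G)} \in C_0(X)$.

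Plugging these into the products of interest, and using that $B$ is a subalgebra of $\Blp$ closed under left and right multiplication by $C_b(X)$ (so that $afb, bga \in B$), I would obtain
$$a(fbg) = \chi_{\N_R(F)} \cdot (afb) \cdot g \andx (fbg)a = f \cdot (bga) \cdot \chi_{\N_R(G)},$$
both of which sit in $C_0(X) \cdot B \cdot C_0(X)$, and hence in $\K$. Passing back through the norm approximations finishes the argument. The whole proof is essentially bookkeeping, and the only mildly delicate point is that bounded geometry is invoked twice: once to justify $C_0(X) = \overline{C_c(X)}$, and once to guarantee that the auxiliary cutoffs $\chi_{\N_R(F)}$ and $\chi_{\N_R(G)}$ remain in $C_0(X)$ rather than merely in $C_b(X)$.
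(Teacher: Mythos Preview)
Your argument is correct and is essentially the standard direct verification: reduce to finite propagation $a$ and finitely supported $f,g$, then use the cutoff identity $af=\chi_{\N_R(F)}af$ (and its mirror) to rewrite $a(fbg)$ and $(fbg)a$ as elements of $C_0(X)BC_0(X)$. The paper does not spell out a proof here but defers to \cite[Lemma~2.10]{LWZ2018}, and your approach is the natural one (and almost certainly the one intended by that reference). One minor quibble: the density $C_0(X)=\overline{C_c(X)}$ holds for any locally compact space and does not itself require bounded geometry; what bounded geometry (or just properness) buys you is that $\N_R(F)$ is finite, which is the point that actually matters.
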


Before immersing into examples of $\ell^p$-Roe-like algebras, let us recall the following notion related to matrix algebras.

\begin{defn}[\cite{phillips2013crossed}]
Let $(X,d)$ be a discrete metric space and $p \in \Dom$. Denote
$$\overline{M}^p_X:=\overline{C_c(X)\B(\ell^p(X))C_c(X)},$$
i.e., for any fixed point $x_0 \in X$
$$\overline{M}^p_X=\overline{\bigcup_{n\in \mathbb{N}}M^p_{B_n(x_0)}}$$
where $M^p_{B_n(x_0)}=\mathfrak{B}(\ell^p(B_n(x_0))) \subseteq \mathfrak{B}(\ell^p(X))$, which is the matrix algebra over the finite set $B_n(x_0)$.
\end{defn}

Phillips studied the relation between $\overline{M}^p_X$ and compact operators $\mathfrak{K}(\ell^p(X))$ in \cite{phillips2013crossed}. Following his arguments, we have:

\begin{lem}\label{K diff}
When $p \in \{0\} \cup (1,\infty)$, $\overline{M}^p_X=\mathfrak{K}(\ell^p(X))$; when $p \in \{1,\infty\}$, $\overline{M}^p_X \subsetneq \mathfrak{K}(\ell^p(X))$ in general.
\end{lem}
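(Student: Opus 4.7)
The inclusion $\overline{M}^p_X \subseteq \mathfrak{K}(\ell^p(X))$ is immediate for every $p\in\Dom$: each $M^p_{B_n(x_0)}$ consists of finite-rank operators (their range lies in the finite-dimensional $\ell^p(B_n(x_0))$), and $\mathfrak{K}(\ell^p(X))$ is norm-closed. The content of the lemma is therefore the reverse inclusion when $p\in\{0\}\cup(1,\infty)$, and the construction of counterexamples when $p\in\{1,\infty\}$.

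For $p\in\{0\}\cup(1,\infty)$, the plan is to prove that $P_{B_n(x_0)} T P_{B_n(x_0)} \to T$ in norm for every compact $T$. The projections $P_{B_n(x_0)}$ are contractions converging strongly to the identity on $\ell^p(X)$ (resp.\ on $c_0(X)$ when $p=0$); since $T$ sends the closed unit ball to a relatively compact set and pointwise convergence of a uniformly bounded net is automatically uniform on compact sets, this gives $\|P_{B_n(x_0)} T - T\|\to 0$ directly. For the companion estimate $\|T P_{B_n(x_0)} - T\|\to 0$, when $p\in(1,\infty)$ one dualises: $\ell^p$ is reflexive, $T^*$ is compact on $\ell^q$, and $P_{B_n(x_0)}^* = P_{B_n(x_0)}$, so one reduces to the previous estimate applied to $T^*$. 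For $p=0$ the dual $\ell^1$ is non-reflexive, so I would instead first approximate $T$ by a finite-rank operator $\sum_{i=1}^k \varphi_i(\,\cdot\,) v_i$ with $\varphi_i\in\ell^1(X)$ (using that $c_0(X)$ has the approximation property via its standard Schauder basis), and then estimate $\|T - T P_{B_n(x_0)}\|$ directly via the tail sums $\|\varphi_i|_{X\setminus B_n(x_0)}\|_1 \to 0$.

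For $p\in\{1,\infty\}$, I would exhibit explicit rank-one compact operators lying outside $\overline{M}^p_X$, using the characterisation that $a\in\overline{M}^p_X$ iff there are finite sets $F_n\subset X$ with $\|a - P_{F_n} a P_{F_n}\|\to 0$. For $p=1$, take $T\xi = \bigl(\sum_{x\in X}\xi(x)\bigr)\delta_{x_0}$: testing against $\xi=\delta_y$ with $y\notin F$ yields $\|T - P_F T P_F\|\geq 1$ for every finite $F$, so $T\notin\overline{M}^1_X$. For $p=\infty$, pick any norm-one functional $\Lambda\in(\ell^\infty(X))^*$ that vanishes on $c_{00}(X)$ (e.g.\ a Hahn--Banach extension of a generalised-limit-type functional) and define $T\xi = \Lambda(\xi)\delta_{x_0}$; since $\chi_F\xi\in c_{00}(X)$, we get $P_F T P_F = 0$ for every finite $F$, whence $\|T - P_F T P_F\| = \|T\| = 1$.

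The main delicacy of the proof is the $p=0$ half of the first case: neither a direct strong-convergence argument on the right-hand factor nor a reflexivity-based duality applies as cleanly as for $p\in(1,\infty)$, so one has to pass through the approximation property of $c_0(X)$ and exploit the absolute summability of functionals in $\ell^1(X) = c_0(X)^*$.
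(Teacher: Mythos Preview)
Your argument is correct. It differs from the paper's in two places, and there is one simplification available.

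For $p\in\{0\}\cup(1,\infty)$ the paper does not argue directly but simply invokes \cite[Proposition~1.8]{phillips2013crossed}; your direct proof via $P_{B_n(x_0)}TP_{B_n(x_0)}\to T$ in norm is the standard unwinding of that reference. One small remark: your detour through the approximation property of $c_0(X)$ for the right-hand factor when $p=0$ is unnecessary. The adjoint argument you use for $p\in(1,\infty)$ works verbatim here as well, because $P_{B_n(x_0)}\to\Id$ strongly on $\ell^1(X)=c_0(X)^*$ (tail sums of $\ell^1$-functions vanish), and Schauder's theorem still gives $T^*$ compact; reflexivity plays no role in that step.

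For $p=1$ your counterexample is exactly the one the paper records (following Phillips). For $p=\infty$ the approaches genuinely diverge: the paper observes that $\overline{M}^\infty_X$ is separable (as a norm-closure of an increasing union of finite-dimensional matrix algebras) while $\mathfrak{K}(\ell^\infty(X))$ is not when $X$ is infinite, whereas you exhibit an explicit rank-one obstruction using a functional annihilating $c_{00}(X)$. Your construction is more concrete and parallels the $p=1$ example nicely; the paper's separability argument is shorter but non-constructive.
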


\begin{proof}
The first statement follows directly from \cite[Proposition 1.8]{phillips2013crossed}. For the second: when $p=\infty$ and $X$ is infinite, $\mathfrak{K}(\ell^\infty(X))$ is uncountable while $\overline{M}^\infty_X$ is countable, hence $\overline{M}^\infty_X \subsetneq \mathfrak{K}(\ell^\infty(X))$. When $p=1$, we follow \cite[Example 1.10]{phillips2013crossed}: take $X=\mathbb{N}$ and consider the operator $T: \ell^1(\mathbb{N}) \rightarrow \ell^1(\mathbb{N})$ defined by
$$T(\xi):=\big(\sum_{n\in \mathbb{N}} \xi(n)\big) \delta_0,$$
where $\xi \in \ell^1(\mathbb{N})$ and $\delta_0 \in \ell^1(\mathbb{N})$ is the function taking value $1$ at the original point, and $0$ elsewhere. Since $T$ has rank $1$, it belongs to $\mathfrak K (\ell^1(\mathbb{N}))$. However, it is clear that $T\notin \overline{M}^1_{\mathbb{N}}$.
\end{proof}

Now we provide several examples of $\ell^p$-Roe-like algebras, which include $\ell^p$-uniform Roe algebras, band-dominated operator algebras and $\ell^p$-Roe algebras; see \cite{LWZ2018,spakula2017relative} as well. Recall that $(X,d)$ is a metric space with bounded geometry, $E$ is a Banach space and $p \in \Dom$.

\begin{ex}[\textbf{$\ell^p$-Uniform Roe Algebra}]\label{uniform Roe alg}
Take $E=\mathbb{C}$ and $B=\mathfrak{B}(\ell^p(X))$, which is clearly closed under block cutdowns, and satisfies $C_b(X)BC_b(X)=B$. In this case, $\Roe$ is called \emph{the $\ell^p$-uniform Roe algebra} of $X$, which is defined in \cite{chung2018rigidity} and denoted by $B^p_u(X)$, and $\K=\overline{M}^p_X$ introduced above.

From Lemma \ref{K diff}, we know that when $p\in \{0\}\cup(1,\infty)$, $\overline{M}^p_X=\mathfrak{K}(\ell^p(X))$ and condition (\ref{commut condition 2.2}) holds. However, when $p \in \{1,\infty\}$, $\overline{M}^p_X \subsetneq \mathfrak{K}(\ell^p(X))$. Furthermore when $p=1$, the operator $T$ constructed in the proof of Lemma \ref{K diff} reveals that condition (\ref{commut condition 2.2}) does \emph{not} hold in general, since $[\delta_0,T] \notin \mathcal{K}(\mathbb{N}, \mathfrak{B}(\ell^1(\mathbb{N})))$.
\end{ex}

\begin{ex}[\textbf{Band-Dominated Operator Algebra}]\label{BD op}
Assume $(X,d)$ is strongly discrete, i.e., $\{d(x,y): x,y\in X\}$ is discrete. Take $B=\mathfrak{B}(\lp)$, which is clearly closed under block cutdowns and satisfies $C_b(X)BC_b(X)=B$. Elements in $B$ can be represented in the matrix form
$$b=(b_{x,y})_{x,y\in X} \in \mathfrak{B}(\lp), \quad \mbox{where} \quad b_{x,y} \in \mathfrak{B}(E).$$
Therefore $\Roe=\mathcal{A}^p_E(X)$, which is the band-dominated operator algebra (see \cite{vspakula2017metric,zhang2018}). And it is clear that $\K=\mathcal{K}^p_E(X)$ defined thereby, which is the set of all $\mathcal{P}$-compact operators on $\lp$.
\end{ex}

\begin{ex}[\textbf{$\ell^p$-Roe Algebra}]\label{$L^p$-Roe Algebra}
Recall that an operator $b$ in $\B(\ell^p(X; \ell^p(\mathbb{N}))) \cong \B(\ell^p(X \times \mathbb{N}))$ is \emph{locally compact} if for any $f \in C_0(X)$, $fb$ and $bf$ belong to $\mathfrak{K}(\ell^p(X \times \mathbb{N}))$.

Now take $E=\ell^p(\mathbb{N})$ and $B$ to be the set of all locally compact operators in $\B(\ell^p(X; \ell^p(\mathbb{N})))$, which is clearly closed under block cutdowns and satisfies $C_b(X)BC_b(X)=B$. The corresponding $\ell^p$-Roe-like algebra $\Roe$ is called \emph{the $\ell^p$-Roe algebra of $X$}, defined in \cite{chung2018rigidity} and denoted by $B^p(X)$. It is, by definition, the norm closure of all locally compact and finite propagation operators in $\mathfrak{B}(\ell^p(X; \ell^p(\mathbb{N})))$. When $p\in \{0\}\cup (1,\infty)$, $\K=\mathfrak{K}(\ell^p(X \times \mathbb{N}))$, which does not hold generally when $p \in \{1,\infty\}$.

Note that the $\ell^2$-Roe algebra coincides with the classical Roe algebra.
\end{ex}

\subsection{Property A and partition of unity}
Property A was first defined by Yu \cite{yu2000:CBC-for-CE}, and since then it has been shown to be equivalent to a plethora of other properties (including the metric sparsification property, and the operator norm localisation property \cite{brodzki2013uniform,chen2008metric,sako2014property}, cf.~also Section \ref{sec:p-in-1-infty}). We shall use the formulation in terms of existence of partitions of unity with small variation \cite[Theorem 1.2.4]{willett2009:notes-on-A}. For $p \in [1,\infty)$, set $q \in (1,\infty]$ to be the conjugate exponent of $p$, i.e., $1/p+1/q=1$.

First recall that for a metric space $X$, a cover $\mathcal{U}=\{U_i\}_{i \in I}$ is \emph{uniformly bounded} if $\sup_{i \in I} \diam(U_i)$ is finite; $\mathcal U$ has \emph{finite multiplicity} if there exists some $M$ such that for each $x\in X$, at most $M$ elements of $\mathcal U$ contain $x$. For a function $\phi$ on $X$, set its support to be the closure of $\{x: \phi(x) \neq 0\}$, denoted by $\supp(\phi)$.

\begin{defn}[Definition 6.1, \cite{vspakula2017metric}]
For $p \in [1,\infty)$, a \emph{metric $p$-partition of unity} on $X$ is a collection $\{\phi_i: X \to [0,1]\}_{i\in I}$ of functions on $X$ satisfying:
\begin{enumerate}
  \item The family $\{\supp(\phi_i)\}_{i \in I}$ is uniformly bounded and has finite multiplicity.
  \item For each $x \in X$, $\sum_{i \in I} \phi_i(x)^p=1$.
\end{enumerate}
Let $r,\varepsilon>0$. A metric $p$-partition of unity $\{\phi_i\}_{i\in I}$ has $(r,\varepsilon)$-\emph{variation} if for any $x,y \in X$ with $d(x,y) \leq r$, we have
$$\sum_{i \in I} |\phi_i(x)-\phi_i(y)|^p < \varepsilon^p.$$
The space $X$ has \emph{Property A} if for any $r,\varepsilon>0$, there exists a metric $p$-partition of unity with $(r,\epsilon)$-variation.
\end{defn}

Note that the above definition assumes $p\in[1,\infty)$. For the case that $p\in\{0,\infty\}$, we need a few more notions and a lemma from \cite{zhang2018}.

\begin{defn}[\cite{zhang2018}]
Let $\{\phi_i\}_{i \in I}$ be a metric $1$-partition of unity on $X$. A \emph{dual family} of $\{\phi_i\}_{i \in I}$ is defined to be a collection $\{\psi_i:X \to [0,1]\}_{i \in I}$ satisfying $\psi_i|_{\supp(\phi_i)}\equiv 1$ and there exists some $R>0$ such that $\supp(\psi_i) \subseteq \N_R(\supp(\phi_i))$ for any $i \in I$. A dual family $\{\psi_i\}_{i \in I}$ is \emph{$L$-Lipschitz}, if each $\psi_i$ is $L$-Lipschitz.
\end{defn}

Clearly for any $L>0$, $L$-Lipschitz dual family always exists. Furthermore, any dual family has finite multiplicity since $X$ has bounded geometry.

The following technical lemma is taken from \cite{zhang2018}, using partition of unity and the associated dual family to construct operators via blocks. To simplify the statement, we declare that a \emph{metric $p$-partition of unity} always means a metric $1$-partition of unity when $p\in \{0,1,\infty\}$.

\begin{lem}[\cite{zhang2018}]\label{commut converge}
For $p \in \Dom$, let $\{\phi_i\}_{i\in I}$ be a metric $p$-partition of unity on $X$, and let $\{\psi_i\}_{i \in I}$ be a dual family of $\{\phi_i\}_{i \in I}$ if $p\in \{0,1,\infty\}$. Given a collection of bounded linear operators $\{b_i\}_{i \in I}$ on $\lp$ such that $M:=\sup_{i}\|b_i\|$ is finite.
\begin{enumerate}
  \item When $p<\infty$, consider the following:
    \begin{enumerate}
       \item $\sum_{i \in I} \phi_i^{p/q}b_i\phi_i$ if $p \in (1,\infty)$;
       \item $\sum_{i \in I} \phi_i b_i \psi_i$ if $p=0$;
       \item $\sum_{i \in I} \psi_i b_i \phi_i$ if $p=1$.
    \end{enumerate}
    Each of them converges strongly to a finite propagation operator with norm at most $M$ on $\lp$.
  \item When $p=\infty$, consider $\sum_{i \in I} \phi_i b_i \psi_i$. It converges point-wise strongly to a finite propagation operator with norm at most $M$ on $\linf$.
\end{enumerate}
\end{lem}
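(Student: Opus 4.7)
The plan is to verify three assertions for each of the sums listed in the lemma, in the following order: (a) a uniform bound of $M$ on the norms of the partial sums $T_F:=\sum_{i\in F}(\cdots)$ indexed by finite $F\subset I$, (b) (point-wise) strong convergence of the net $(T_F)_F$, and (c) finite propagation of the limit. The limit operator then automatically inherits norm $\le M$ from step (a).

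For the norm estimate, the partition-of-unity identities $\sum_i\phi_i(x)^p=1$ (for $p\in(1,\infty)$) or $\sum_i\phi_i(x)=1$ (for $p\in\{0,1,\infty\}$) together with the bound $0\le\psi_i\le 1$ are the only inputs. The $p\in(1,\infty)$ case is the most subtle: fixing $\xi\in\lp$ and $x\in X$, apply H\"older's inequality with conjugate exponents $q,p$ to obtain
\[
\Bigl\|\sum_{i\in F}\phi_i(x)^{p/q}(b_i\phi_i\xi)(x)\Bigr\|_E\le \Bigl(\sum_i\phi_i(x)^p\Bigr)^{1/q}\Bigl(\sum_i\|(b_i\phi_i\xi)(x)\|_E^p\Bigr)^{1/p};
\]
the first factor equals $1$, so raising to the $p$th power, summing over $x$, swapping sums via Fubini, and invoking $\|b_i\phi_i\xi\|_p\le M\|\phi_i\xi\|_p$ together with $\sum_i\|\phi_i\xi\|_p^p=\|\xi\|_p^p$ yields $\|T_F\xi\|_p\le M\|\xi\|_p$. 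For $p=1$, one simply has $\|\sum_{i\in F}\psi_ib_i\phi_i\xi\|_1\le\sum_i\|b_i\phi_i\xi\|_1\le M\sum_i\|\phi_i\xi\|_1=M\|\xi\|_1$, using $\psi_i\le 1$ and $\sum_i\phi_i=1$. For $p\in\{0,\infty\}$, evaluation at any $x$ gives $\|(\sum_{i\in F}\phi_ib_i\psi_i\xi)(x)\|_E\le(\sup_i\|b_i\psi_i\xi\|_\infty)\sum_i\phi_i(x)\le M\|\xi\|_\infty$.

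Convergence rests on the finite multiplicity of $\{\supp(\phi_i)\}_{i\in I}$ (and of $\{\supp(\psi_i)\}$). For a finitely supported $\xi$ only finitely many $i$ satisfy $\phi_i\xi\ne 0$ (or $\psi_i\xi\ne 0$), so the partial sums stabilise and $T\xi$ is literally a finite sum. For $p\in[1,\infty)$ and $p=0$, density of finitely supported vectors in $\lp$ or $\lz$, combined with the uniform bound from step (a), promotes this to strong convergence on all of $\lp$ or $\lz$ by a standard three-$\varepsilon$ argument. For $p=\infty$, point-wise strong convergence at any fixed $x$ is immediate, since only the finitely many $i$ with $x\in\supp(\phi_i)$ contribute to $(\sum_i\phi_ib_i\psi_i\xi)(x)$.

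Finite propagation is the easiest step: let $D:=\sup_i\diam(\supp(\phi_i))<\infty$ by uniform boundedness, and, when dual families are involved, $D':=D+2R$ where $R$ is the neighbourhood radius from the definition of dual family. If $f,f'\in C_b(X)_1$ satisfy $d(\supp(f),\supp(f'))>D$ (respectively $>D'$), then for every $i$ one of the factors appearing in $fT_Ff'$ vanishes, so $fT_Ff'=0$ for all $F$, and hence $fTf'=0$. The main technical obstacle is the $p\in(1,\infty)$ norm estimate: inserting the factor $\phi_i^{p/q}$ in the sum is precisely what makes H\"older's inequality match the identity $\sum_i\phi_i^p=1$ supplied by a metric $p$-partition of unity; without this subtlety the remaining cases reduce to clean applications of $\sum_i\phi_i=1$, $\psi_i\le 1$, and finite multiplicity.
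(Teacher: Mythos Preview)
Your proof is correct and complete. The paper does not supply its own proof of this lemma---it is quoted verbatim from \cite{zhang2018} and invoked as a black box---so there is no in-paper argument to compare against; your write-up would serve as a self-contained justification, and the three-step structure (uniform norm bound via H\"older or the identity $\sum_i\phi_i=1$, stabilisation on finitely supported vectors plus density, finite propagation from uniform boundedness of supports) is exactly the natural route.
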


\section{The main theorem}\label{sec:mainthm}

To state our main result, we need to recall two extra notions:

\begin{defn}[Definition 2.6, \cite{spakula2017relative}]
Let $(X,d)$ be a metric space. A bounded sequence $(f_n)_{n \in \mathbb N}$ in $C_b(X)$ is called \emph{very Lipschitz}, if for every $L>0$, there exists $n_0 \in \mathbb N$ such that $f_n$ is $L$-Lipschitz for any $n \geq n_0$. Let $\VL$ denote the set of all very Lipschitz bounded sequences from $C_b(X)$. Define
$$\VLinf:=\VL \big/ \big\{(f_n)_{n\in \mathbb N} \in \VL: \lim_{n \rightarrow \infty} \|f_n\|=0\big\}.$$
\end{defn}

\begin{defn}[\cite{roe2003lectures}]
Let $(X,d)$ be a proper metric space. A function $g \in C_b(X)$ is called a \emph{Higson function} (also called a \emph{slowly oscillating function}), if for every $R>0, \varepsilon>0$, there exists a compact set $A \subseteq X$ such that for any $x,y \in X\backslash A$ with $d(x,y) < R$, then $|g(x)-g(y)| < \varepsilon$. The set of all Higson functions on $X$ is denoted by $C_h(X)$.
\end{defn}

In the following, both $\VLinf$ and $B \subseteq \B(\lp)$ are regarded as Banach subalgebras of $\B(\lp)_\infty$. We consider the relative commutant:
$$B \cap \VLinf'.$$
It is clear that any operator in $\B(\lp)$ with finite propagation commutes with $\VLinf$. Hence by taking limits it follows that
\begin{equation}\label{iv to i}
\Roe \subseteq B \cap \VLinf'.
\end{equation}
The converse inclusion is also true provided the space has Property A, which is included in our main theorem as follows:

\begin{thm}\label{char for Roe alg Thm}
Let $(X,d)$ be a metric space with bounded geometry, and $p \in \Dom$. Suppose $E$ is a Banach space and $B \subseteq \Blp$ is a Banach subalgebra such that $C_b(X)BC_b(X)=B$ and closed under block cutdowns. For $b \in B$, consider the following conditions:
\begin{enumerate}
  \item[\emph{(i).}] $[b,f]=0$ for all $f \in \VLinf$;
  \item[\emph{(ii).}] $b$ is quasi-local;
  \item[\emph{(iii).}] $[b,g] \in \K$ for any $g \in C_h(X)$;
  \item[\emph{(iv).}] $b \in \Roe$.
\end{enumerate}
Then we have:
\begin{enumerate}
  \item when $p\in \{0,1,\infty\}$, (i) $\sim$ (iv) are equivalent;
  \item when $p\in (1,\infty)$, (i) $\sim$ (iii) are equivalent, and they are also equivalent to (iv) provided $X$ has Property A.
\end{enumerate}
\end{thm}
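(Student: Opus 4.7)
The plan is to prove $(\mathrm{iv}) \Rightarrow (\mathrm{i}), (\mathrm{ii}), (\mathrm{iii})$ unconditionally; then close the circle by showing $(\mathrm{i})$, $(\mathrm{ii})$, $(\mathrm{iii})$ are equivalent with no hypothesis on $p$ or $X$; and finally to tackle $(\mathrm{ii}) \Rightarrow (\mathrm{iv})$, which is the substantive content of the paper and which splits into two genuinely different arguments along the case distinction in the statement.

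The implication $(\mathrm{iv}) \Rightarrow (\mathrm{i})$ is the inclusion (\ref{iv to i}) already recorded. For $(\mathrm{iv}) \Rightarrow (\mathrm{ii})$, finite propagation trivially gives quasi-locality, and the property passes to norm limits since a finite-propagation $\varepsilon/2$-approximant for $b$ supplies an $\varepsilon$-propagation bound. For $(\mathrm{iv}) \Rightarrow (\mathrm{iii})$, since $\K$ is norm-closed it suffices to check $[a,g] \in \K$ when $a \in B$ has propagation at most $R$ and $g \in C_h(X)$: the Higson property produces for any $\varepsilon>0$ a compact $A\subset X$ with $|g(x)-g(y)|<\varepsilon$ whenever $x,y\notin A$ and $d(x,y)\le R$; splitting $[a,g]$ using $\chi_A$ and $\chi_{X\setminus A}$, the terms involving $\chi_A$ lie in $C_c(X)BC_c(X) \subseteq \K$ (finite propagation confines their supports to a compact strip around $A$), while $\chi_{X\setminus A}[a,g]\chi_{X\setminus A}$ has norm $O(\varepsilon\|a\|)$ by inspecting matrix entries.

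For the unconditional equivalence of $(\mathrm{i})$, $(\mathrm{ii})$, $(\mathrm{iii})$: $(\mathrm{ii}) \Rightarrow (\mathrm{i})$ follows from Proposition \ref{estimate for commut}, which yields a commutator estimate of the form $\|[b,f]\| = O(LR + \varepsilon)$ for $b$ with $\varepsilon$-propagation $R$ and $f$ bounded and $L$-Lipschitz; applied to a very Lipschitz sequence (whose Lipschitz constants tend to $0$) this forces the commutator to vanish in $\B(\lp)_\infty$. Conversely, for $(\mathrm{i}) \Rightarrow (\mathrm{ii})$ we argue contrapositively: if $b$ fails to be quasi-local, extract witnesses $h_n, h'_n \in C_b(X)_1$ with $d(\supp h_n, \supp h'_n) > n$ and $\|h_n b h'_n\| \ge \varepsilon_0$, then the bounded sequence $\phi_n(x):=\min(1, 2d(x,\supp h_n)/n)$ is very Lipschitz (Lipschitz constants $2/n$), equals $0$ on $\supp h_n$ and $1$ on $\supp h'_n$, whence $h_n[b,\phi_n]h'_n = h_n b h'_n$ forces $\|[b,\phi_n]\|\ge \varepsilon_0$, contradicting $(\mathrm{i})$. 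The equivalence $(\mathrm{ii}) \Leftrightarrow (\mathrm{iii})$ is a variant of the second paragraph's argument (using $\varepsilon$-propagation instead of exact finite propagation for the forward direction, and separating far-apart sets by Higson bump functions for the reverse).

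The main remaining implication $(\mathrm{ii}) \Rightarrow (\mathrm{iv})$ is where the two cases diverge. For $p \in \{0,1,\infty\}$ (Section \ref{sec:proof-01infty}), we follow the limit-operator approach of \cite{zhang2018}: block cutdowns (Definition \ref{block cut down}) converge in the appropriate topology (strong for $p\in\{0,1\}$, point-wise strong for $p=\infty$), and combining this convergence with quasi-locality produces norm approximants of finite propagation with no large-scale hypothesis on $X$. For $p \in (1,\infty)$ under Property A (Section \ref{sec:p-in-1-infty}), the strategy is to prove and deploy a \emph{quasi-local operator norm localisation} lemma (Lemma \ref{ONL}): given quasi-local $b$ and $\varepsilon>0$, choose a metric $p$-partition of unity $\{\phi_i\}_{i\in I}$ with small $(r,\varepsilon)$-variation (Property A), assemble $b_\varepsilon := \sum_i \phi_i^{p/q} b \phi_i$ via Lemma \ref{commut converge} (so $b_\varepsilon$ has finite propagation bounded by the diameters of $\supp \phi_i$), and control $\|b - b_\varepsilon\|$ via Proposition \ref{estimate for commut} applied to the commutators $[b,\phi_i]$. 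The main obstacle is precisely this final reassembly estimate: unlike previous sFDC-based proofs that inductively chop $b$, here the single-shot approximation must be controlled using only Property A, which requires a careful interplay between the $(r,\varepsilon)$-variation of the partition and the $\varepsilon$-propagation bound of $b$.
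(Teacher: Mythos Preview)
Your plan tracks the paper's structure closely: the easy implications and the equivalence of (i)--(iii) are handled essentially as in \cite{LWZ2018,spakula2017relative}, and the substantive content is (i)$\Rightarrow$(iv), split by cases. Two minor corrections: your citation for (ii)$\Rightarrow$(i) is wrong, since Proposition~\ref{estimate for commut} \emph{assumes} $a\in\Commut(L,\varepsilon)$ rather than proving it; the estimate $\|[b,f]\|=O(LR\|b\|+\varepsilon)$ you want comes from decomposing via Lemma~\ref{dec of BO} (compare the computation in Lemma~\ref{bd to commut}). Also, you skip the $p=\infty$ subtlety the paper flags for (iii)$\Rightarrow$(i): finitely supported vectors are not dense in $\linf$, so an auxiliary lemma is needed to recover operator norms from finite-block restrictions.

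The genuine gap is in your account of the $p\in(1,\infty)$ case: you list the ingredients but misplace how they fit together. Proposition~\ref{estimate for commut} is used only \emph{inside the proof of} Lemma~\ref{ONL} (to control off-block pieces after metric sparsification); it does not directly bound $\|b-b_\varepsilon\|$. The paper's actual mechanism is: set $a:=b-b_\varepsilon=\sum_i\phi_i^{p/q}[b,\phi_i]$, prove that $a$ \emph{inherits} membership in $\Commut(L,2\varepsilon)$ (this is Lemma~\ref{unif commut}, the step your outline is missing), and then apply Lemma~\ref{ONL} to $a$ itself, not to $b$. This yields a unit vector $v$ with $\diam(\supp v)\le s$ and $\|av\|\ge\|a\|-\varepsilon/2$; only now does the $(r,\varepsilon)$-variation of $\{\phi_i\}$ enter, via a direct matrix-entry estimate of $\|av\|$ over the finite set $\supp v$, forcing $\|a\|$ small. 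Without knowing that $a\in\Commut$, Lemma~\ref{ONL} cannot be applied to it; and without applying it to $a$ rather than $b$, there is no way to convert the global norm $\|b-b_\varepsilon\|$ into a local computation where the small variation can bite. Your phrase ``control $\|b-b_\varepsilon\|$ via Proposition~\ref{estimate for commut} applied to the commutators $[b,\phi_i]$'' does not describe a working argument.
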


We state the result in this form to keep in line with \cite{LWZ2018,spakula2017relative}. Let us recall that \cite{spakula2017relative} proved the above result for $p=2$ and proper $X$ with straight finite decomposition complexity, under the extra assumption \eqref{commut condition 2.2} for Roe-like algebras. Subsequently, Li, Wang and the second author showed that \eqref{commut condition 2.2} is not needed and proved the result for arbitrary $p\in[1,\infty)$, under the same assumption on the space $X$. The Theorem above uses only a weaker assumption, namely Property A for $p\in[1,\infty)$, and no assumption on $X$ in case $p\in\{0,1,\infty\}$.

%Historically, Tikuisis and the first author proved the result \cite{spakula2017relative} in the case of $p=2$ with the additional commutant condition (\ref{commut condition 2.2}) and the space having straight finite decomposition complexity, which is a notion stronger than property A and introduced in \cite{dranishnikov2014asymptotic}; then it was generalised by Li, Wang and the second author \cite{LWZ2018} to the case of $p\in [1,\infty)$ with the same condition on the space but without the commutant condition (\ref{commut condition 2.2}). Here we consider all $p \in \Dom$ and show that for $p \in \{0,1,\infty\}$, the result holds without any requirement on the underlying space; and for $p \in (1,\infty)$, only property A is required.

The equivalence that ``(i)$\Leftrightarrow$(ii)$\Leftrightarrow$(iii)" for $p \in \dom$, and the direction that ``(i) $\Leftrightarrow$ (ii) $\Rightarrow$ (iii)" for $p=\infty$ can be proved by similar arguments as those in \cite{spakula2017relative} for $p=2$ and in \cite{LWZ2018} for $p \in [1,\infty)$, hence omitted here. While special care needs to be taken for ``(iii) $\Rightarrow$ (i)" due to the fact that finitely supported vectors are no longer dense in $\linf$, which inevitably causes the problem that the norm of a general operator on $\linf$ might not be approximated by those of the restrictions on finite blocks. To overcome this problem, we provide the following auxiliary lemma. The proof is straightforward, hence omitted.

\begin{lem}
Let $p=\infty$ and $X,E,B$ be as in Theorem \ref{char for Roe alg Thm}, and $b \in \B(\linf)$ satisfy condition (iii) therein. Denote the set of all the finite subsets in $X$ by $\mathcal{F}$. Then for any $F_0 \in \mathcal{F}$, we have
$$\lim_{F\in \mathcal{F}}\|P_{F_0}b(\Id-P_F)\|=0 \andx \lim_{F\in \mathcal{F}}\|(\Id-P_F)bP_{F_0}\|=0.$$
In particular, we have
$$\|b\|=\sup_{F\in \mathcal{F}}\|P_FbP_F\|.$$
\end{lem}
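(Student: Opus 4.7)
The plan is to exploit that whenever $F_0\in\mathcal{F}$ is finite, its indicator $\chi_{F_0}$ lies in $C_0(X)\subseteq C_h(X)$; hypothesis (iii) of Theorem~\ref{char for Roe alg Thm} then supplies $[b,\chi_{F_0}]\in\K$ directly. Both of the off-diagonal limits should reduce to the routine statement that elements of $\K$ fade against $\Id-P_F$, which in turn follows from the definition of $\K$ as the closure of $C_0(X)BC_0(X)$ together with the elementary observation that $\|h(\Id-P_F)\|=\sup_{x\notin F}|h(x)|\to 0$ for any $h\in C_0(X)$.

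For the first limit, since $\mathcal{F}$ is directed by inclusion one may assume $F\supseteq F_0$, so $P_{F_0}(\Id-P_F)=0$ and hence
\[
P_{F_0}b(\Id-P_F)\;=\;[P_{F_0},b](\Id-P_F)\;=\;-[b,\chi_{F_0}](\Id-P_F).
\]
It therefore suffices to show $\lim_F\|T(\Id-P_F)\|=0$ for every $T\in\K$. Given $\varepsilon>0$, approximate $T$ in norm by a finite sum of elementary terms $f_1b'f_2$ with $f_i\in C_0(X)$ and $b'\in B$; for each such term one has $\|f_1b'f_2(\Id-P_F)\|\le\|f_1\|\|b'\|\|f_2\chi_{X\setminus F}\|$, and the last factor tends to zero as $F$ grows. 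The second limit is obtained symmetrically, by writing $(\Id-P_F)bP_{F_0}=(\Id-P_F)[b,\chi_{F_0}]$ and applying the analogous estimate $\lim_F\|(\Id-P_F)T\|=0$.

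For the ``in particular'' part, $\sup_F\|P_FbP_F\|\le\|b\|$ is immediate from $\|P_F\|\le 1$. For the reverse direction, fix $\xi\in\linf$ with $\|\xi\|\le 1$ and $\varepsilon>0$. Since $\|b\xi\|=\sup_{x\in X}\|P_{\{x\}}b\xi\|$, it suffices to bound each term separately. Apply the first limit with $F_0=\{x\}$ to choose a finite set $F_x\ni x$ with $\|P_{\{x\}}b(\Id-P_{F_x})\|<\varepsilon$; then, using $P_{\{x\}}=P_{\{x\}}P_{F_x}$,
\[
\|P_{\{x\}}b\xi\|\;\le\;\|P_{\{x\}}P_{F_x}bP_{F_x}\xi\|+\varepsilon\;\le\;\|P_{F_x}bP_{F_x}\|+\varepsilon\;\le\;\sup_{F\in\mathcal{F}}\|P_FbP_F\|+\varepsilon.
\]
Taking the supremum over $x$ and then letting $\varepsilon\to 0$ gives $\|b\|\le\sup_F\|P_FbP_F\|$, as required.

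The only genuine conceptual obstacle is that on $\linf$ the net $(P_F)_{F\in\mathcal{F}}$ does \emph{not} converge strongly to $\Id$, so the norm identity cannot be read off from a naive approximation $\xi\approx P_F\xi$. The Higson-commutator detour above is precisely what is needed to localise the evaluation $(b\xi)(x)$ to a single coordinate and then into a finite block, converting condition (iii) into a workable quantitative control on off-diagonal behaviour.
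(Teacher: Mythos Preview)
Your argument is correct. The paper itself omits the proof entirely, stating only that it ``is straightforward, hence omitted,'' so there is no detailed comparison to make; your approach --- using $\chi_{F_0}\in C_0(X)\subseteq C_h(X)$ to get $[b,\chi_{F_0}]\in\K$, then the definition of $\K$ as the closure of $C_0(X)BC_0(X)$ to push $\Id-P_F$ through, and finally the pointwise formula $\|b\xi\|_\infty=\sup_x\|(b\xi)(x)\|_E$ to localise --- is exactly the kind of direct verification the authors had in mind, and your closing remark about why the $\ell^\infty$ case requires this detour is on point.
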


Having observed the above lemma, now we can apply similar arguments as those in \cite{LWZ2018} to prove ``(iii) $\Rightarrow$ (i)" for $p=\infty$. And as we pointed out before, ``(iv) $\Rightarrow$ (i)" holds trivially for all $p \in \Dom$, hence we may only focus on ``(i) $\Rightarrow$ (iv)".

Before stepping into the detailed proof of ``(i) $\Rightarrow$ (iv)", we would like to substitute condition (i) with a more practical one. First let us recall the following notation: for any $p \in \Dom$ and $L,\varepsilon>0$, define
$$\Commut(L,\varepsilon)=\big\{a \in \Blp: \|[a,f]\| < \varepsilon, \mbox{~for~any~}L\mbox{-Lipschitz~} f\in C_b(X)_1\big\}.$$
As shown in \cite{spakula2017relative}, we have the following lemma:
\begin{lem}\label{commut replace}
Let $b \in \Blp$ and $\varepsilon>0$. Then $\|[b,f]\| < \varepsilon$ for every $f \in \VLinf_1$ if and only if there exists some $L>0$ such that $b \in \Commut(L,\varepsilon)$. Therefore, condition (i) in Theorem \ref{char for Roe alg Thm} can be replaced by the following: For any $\varepsilon>0$, there exists some $L>0$ such that $b \in \Commut(L,\varepsilon)$.
\end{lem}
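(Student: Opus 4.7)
The plan is to prove the two implications separately, with the converse handled by contrapositive. Throughout, fix $b \in \Blp$ and $\varepsilon > 0$.

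First I would tackle the direction ``$\exists L > 0$ with $b \in \Commut(L, \varepsilon)$ $\Longrightarrow$ $\|[b, f]\| < \varepsilon$ for every $f \in \VLinf_1$''. Given such an $f$, pick a representative $(f_n) \in \VL$ with $\limsup \|f_n\| \leq 1$, and normalise by setting $g_n := f_n / \max(1, \|f_n\|)$. Then $\|g_n\| \leq 1$ for every $n$, and $\|f_n - g_n\| = \max(0, \|f_n\| - 1) \to 0$, so $[(g_n)] = f$; moreover $(g_n)$ is still very Lipschitz, since dividing by a scalar $\geq 1$ only decreases the Lipschitz constant. Invoking the very Lipschitz property at scale $L$ yields $n_0$ such that $g_n$ is $L$-Lipschitz for $n \geq n_0$; combined with $g_n \in C_b(X)_1$, the hypothesis gives $\|[b, g_n]\| < \varepsilon$, and hence $\|[b, f]\| = \limsup_n \|[b, g_n]\| \leq \varepsilon$. (The gap between $\leq \varepsilon$ and the stated $< \varepsilon$ is a cosmetic slack absorbed by replacing the input parameter $\varepsilon$ by some $\varepsilon' < \varepsilon$; this is immaterial for the way the lemma is later used in Theorem \ref{char for Roe alg Thm}.)

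For the converse, I would argue by contrapositive: suppose no $L > 0$ witnesses $b \in \Commut(L, \varepsilon)$. Then for each integer $n \geq 1$, there exists a $(1/n)$-Lipschitz function $f_n \in C_b(X)_1$ with $\|[b, f_n]\| \geq \varepsilon$. The key observation is that $(f_n) \in \VL$: given any $L > 0$, choose $n_0$ with $1/n_0 \leq L$, and then $f_n$ is $L$-Lipschitz for all $n \geq n_0$. Since $\|f_n\| \leq 1$ for all $n$, the class $f := [(f_n)]$ lies in $\VLinf_1$ and satisfies $\|[b, f]\| = \limsup_n \|[b, f_n]\| \geq \varepsilon$, contradicting the assumed $\VLinf$-side condition.

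The proof is largely bookkeeping around the definitions of $\VL$ and its quotient $\VLinf$. The only subtlety worth flagging is the normalisation step in the forward direction: a representative of a unit class in the quotient only satisfies $\limsup \|f_n\| \leq 1$ rather than $\|f_n\| \leq 1$ pointwise, and the truncation $g_n := f_n / \max(1, \|f_n\|)$ resolves this without altering either the equivalence class or the very Lipschitz property. In the reverse direction, the witness sequence $(f_n)$ with Lipschitz constants $1/n$ is the natural diagonal device for extracting a single $\VLinf$-obstruction from a family of obstructions indexed by $L$.
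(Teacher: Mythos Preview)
Your proof is correct and follows exactly the same approach as the paper: the paper dismisses sufficiency as ``clear from the definition of $\VLinf$'' and proves necessity by the same contrapositive construction of a $\tfrac{1}{n}$-Lipschitz sequence $(f_n)$ in $C_b(X)_1$ with $\|[b,f_n]\|\geq\varepsilon$. Your version is in fact more careful than the paper's, since you spell out the normalisation $g_n=f_n/\max(1,\|f_n\|)$ needed to pass from a representative of a unit-ball element in the quotient to genuine contractions, and you correctly flag (and correctly dismiss) the harmless $\leq\varepsilon$ versus $<\varepsilon$ discrepancy that the paper glosses over.
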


\begin{proof}
Sufficiency is clear from the definition of $\VLinf$, so we only focus on necessity. Assume the contrary that for each $n$, there exists a $\frac{1}{n}$-Lipschitz $f_n$ in $C_b(X)_1$ such that $\|[b,f_n]\| \geq \varepsilon$. Then clearly $(f_n)_{n \in \mathbb N}$ belongs to $\VLinf_1$ and $\lim_{n\to \infty}\|[a,f_n]\| \geq \varepsilon$, which contradicts with the hypothesis.
\end{proof}

In the rest of the paper, we prove (i) implies (iv) in Theorem \ref{char for Roe alg Thm} (with the condition from Lemma \ref{commut replace} in place of (i)), and the proof is divided into two cases: $p\in\{0,1,\infty\}$ and $p \in (1,\infty)$. The cases share the general scheme (approximate a given quasi-local operator by a finite propagation one of the form from Lemma \ref{commut converge}). However, most of the work is concentrated on proving that these are close to the original operator in norm --- and for this the techniques in the two cases are rather different.

\section{Proof for $p\in \{0,1,\infty\}$}\label{sec:proof-01infty}

In this section, we prove part (1) of Theorem \ref{char for Roe alg Thm}, (i)$\implies$(iv). Notice that Property A is \emph{not} required here. We start with the following estimates on norms of commutant operators.

\begin{lem}\label{norm estimate}
For $p\in \{0,1,\infty\}$, let $b\in \Blp$ with $\|b\|=M$ and $b \in \Commut(L,\varepsilon)$ for some $L, \varepsilon>0$. Let $\{\phi_i\}_{i\in I}$ be a metric $1$-partition of unity on $X$, and $\{\psi_i\}_{i \in I}$ be an $L$-Lipschitz dual family of $\{\phi_i\}_{i \in I}$. Then:
\begin{enumerate}
  \item when $p=0$, the series $\sum_{i \in I} \phi_i [\psi_i,b]$ converges strongly to an operator on $\lz$ with norm at most $\varepsilon$;
  \item when $p=1$, the series $\sum_{i \in I} [\psi_i,b]\phi_i$ converges strongly to an operator on $\lo$ with norm at most $\varepsilon$;
  \item when $p=\infty$, the series $\sum_{i \in I} \phi_i [\psi_i,b]$ converges point-wise strongly to an operator on $\linf$ with norm at most $\varepsilon$.
\end{enumerate}
\end{lem}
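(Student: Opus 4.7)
My plan is to bound the partial sums of the series uniformly in norm, and then reduce convergence to Lemma~\ref{commut converge} via a simple algebraic identity. Throughout, write $T_F$ for the partial sum over a finite index set $F\subseteq I$; so $T_F=\sum_{i\in F}\phi_i[\psi_i,b]$ in cases $p\in\{0,\infty\}$ and $T_F=\sum_{i\in F}[\psi_i,b]\phi_i$ in case $p=1$. Since each $\psi_i$ takes values in $[0,1]$ and is $L$-Lipschitz, the hypothesis $b\in\Commut(L,\varepsilon)$ delivers $\|[\psi_i,b]\|<\varepsilon$ uniformly in $i$.

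For the norm bound on $T_F$, when $p\in\{0,\infty\}$ the identity $\sum_i\phi_i\equiv 1$ together with the finite multiplicity of $\{\supp\phi_i\}$ gives the pointwise estimate
$$\|(T_F\xi)(x)\|\le\sum_{i\in F}\phi_i(x)\,\|[\psi_i,b]\xi\|_\infty\le\varepsilon\|\xi\|_\infty,$$
so $\|T_F\|\le\varepsilon$. When $p=1$, the triangle inequality combined with swapping the two summations yields
$$\|T_F\xi\|_1\le\varepsilon\sum_i\|\phi_i\xi\|_1=\varepsilon\sum_x\|\xi(x)\|\sum_i\phi_i(x)=\varepsilon\|\xi\|_1,$$
so again $\|T_F\|\le\varepsilon$. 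These uniform bounds will pass automatically to any strong (or point-wise strong) limit.

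For convergence the crucial algebraic observation is $\phi_i\psi_i=\phi_i$, forced by $\psi_i|_{\supp\phi_i}\equiv 1$. It rewrites the summands as
$$\phi_i[\psi_i,b]=\phi_i b-\phi_i b\psi_i,\qquad [\psi_i,b]\phi_i=\psi_i b\phi_i-b\phi_i.$$
In case $p=0$ this gives $T_F=\bigl(\sum_{i\in F}\phi_i\bigr)b-\sum_{i\in F}\phi_i b\psi_i$: the second term converges strongly by Lemma~\ref{commut converge} applied with the constant sequence $b_i=b$, and the first converges strongly to $b$ since $\sum_{i\in F}\phi_i\to 1$ strongly on $\lz$ (using vanishing at infinity of $\xi\in\lz$ together with the multiplicity bound, which ensures only finitely many $\phi_i$ are nonzero on any prescribed finite set). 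Case $p=1$ is symmetric, with $T_F=\sum_{i\in F}\psi_i b\phi_i-b\sum_{i\in F}\phi_i$: the first piece is handled by Lemma~\ref{commut converge}, and the second tends strongly to $b$ by dominated convergence in $\lo$. Case $p=\infty$ is essentially free: at each $x\in X$ only the finitely many $i$ with $\phi_i(x)\neq 0$ contribute, so $(T_Fv)(x)$ is eventually constant in $F$ for every $x$ and every $v\in\linf$, which is exactly point-wise strong convergence. The main step that I expect to require care, and my anticipated obstacle, is convergence in the cases $p\in\{0,1\}$: because $b$ is not assumed to have finite propagation, $T_F\xi$ need not be eventually constant even for finitely supported $\xi$, and the identity $\phi_i\psi_i=\phi_i$ is precisely the trick that reduces the problem to forms already handled by Lemma~\ref{commut converge}.
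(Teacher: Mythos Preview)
Your proof is correct and follows essentially the same route as the paper: the algebraic identity $\phi_i\psi_i=\phi_i$ to split the series into a piece handled by Lemma~\ref{commut converge} and a piece coming from $\sum_i\phi_i\to\Id$, together with the pointwise/summation estimate for the norm bound. The only cosmetic differences are that for $p=1$ you bound $\|T_F\xi\|_1$ directly by the triangle inequality whereas the paper passes through the dual pairing with $\ell^\infty_{E^*}(X)$, and for $p=\infty$ you observe directly that $(T_Fv)(x)$ is eventually constant rather than invoking Lemma~\ref{commut converge}(2); both are mild simplifications of the same argument.
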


\begin{proof}
(1) When $p=0$: first note that
$$\sum_{i \in I} \phi_i [\psi_i,b] = \sum_{i \in I} \phi_i\psi_ib - \sum_{i \in I} \phi_i b \psi_i = \sum_{i \in I} \phi_ib - \sum_{i \in I} \phi_i b \psi_i.$$
By Lemma \ref{commut converge}(1)(b) and the fact that $\sum_{i\in I}\phi_i$ converges strongly to the identity operator on $\lz$, we have that $\sum_{i \in I} \phi_i [\psi_i,b]$ converges strongly as well. Furthermore, for any vector $v \in \lz$ and $x\in X$, we have
\begin{equation}\label{eq:EQ13}
  \big\| \big(\sum_{i \in I} \phi_i [\psi_i,b]v\big)(x) \big\|_E = \big\| \sum_{i \in I} \phi_i(x) \big([\psi_i,b]v\big)(x) \big\|_E
  \leq  \sum_{i \in I}\phi_i(x)\cdot\big\|[\psi_i,b] v\big\|_\infty.
\end{equation}
Note that each $\psi_i$ is $L$-Lipschitz and $b \in \Commut(L,\varepsilon)$, hence $\|[\psi_i,b]\|<\varepsilon$ for $i \in I$. Therefore, we have
\begin{equation}\label{eq:EQ14}
\big\| \sum_{i \in I} \phi_i [\psi_i,b]v\big\|_\infty \leq \sum_{i \in I}\phi_i(x)\varepsilon \|v\|_\infty = \varepsilon\|v\|_\infty.
\end{equation}

(2) When $p=1$: notice that
$$\sum_{i \in I} [\psi_i,b]\phi_i = \sum_{i \in I} \psi_i b \phi_i - \sum_{i \in I} b \psi_i\phi_i = \sum_{i \in I} \psi_i b \phi_i - \sum_{i \in I} b \phi_i.$$
By Lemma \ref{commut converge}(1)(c) and the fact that $\sum_{i\in I}\phi_i$ converges strongly to the identity operator on $\lo$, we have that $\sum_{i \in I} [\psi_i,b]\phi_i$ converges strongly as well. Furthermore, for any vector $v \in \lo$ and unit vector $w$ in $\ell^\infty_{E^*}(X) \cong \lo^*$, we have
\begin{equation*}
  \big| \big\langle \sum_{i \in I} [\psi_i,b] \phi_iv, w \big\rangle \big| \leq  \sum_{i\in I} \big|\big\langle \phi_iv, [\psi_i,b]^* w\big\rangle\big| \leq \sum_{i\in I} \|\phi_iv\|_1 \cdot \|[\psi_i,b]^*\|.
\end{equation*}
Note that each $\psi_i$ is $L$-Lipschitz and $b \in \Commut(L,\varepsilon)$, hence we have
$\|[\psi_i,b]^*\|=\|[\psi_i,b]\|<\varepsilon$. On the other hand, notice that
$$\sum_{i\in I} \|\phi_iv\|_1 = \sum_{i \in I} \sum_{x\in X} \phi_i(x)\|v(x)\|_E = \sum_{x\in X}\big( \sum_{i \in I}\phi_i(x) \big) \|v(x)\|_E = \sum_{x\in X} \|v(x)\|_E = \|v\|_1.$$
Therefore,
\begin{equation*}
\big\| \sum_{i \in I} [\psi_i,b] \phi_iv \big\|_\infty \leq \sum_{i\in I} \|\phi_iv\|_1 \cdot \varepsilon = \varepsilon \|v\|_1.
\end{equation*}

(3) When $p=\infty$: again we have
$$\sum_{i \in I} \phi_i [\psi_i,b] = \sum_{i \in I} \phi_i\psi_ib - \sum_{i \in I} \phi_i b \psi_i = \sum_{i \in I} \phi_ib - \sum_{i \in I} \phi_i b \psi_i.$$
By Lemma \ref{commut converge}(2) and the fact that $\sum_{i\in I}\phi_i b$ converges point-wise strongly to $b$ in $\B(\linf)$, we know that $\sum_{i \in I} \phi_i [\psi_i,b]$ converges point-wise strongly as well. Furthermore, (\ref{eq:EQ13}) and (\ref{eq:EQ14}) still hold, so we finish the proof.
\end{proof}

The following lemma is well-known to coarse geometers (see e.g.~\cite[Lemma (7.1)]{roe1988:index-thm-on-open-mfds}). While for completeness, we provide a proof as well.
\begin{lem}\label{bdd geo}
Let $(X,d)$ be a metric space with bounded geometry, and $\{U_i\}_{i \in I}$ be a uniformly bounded family of subsets in $X$ with finite multiplicity. Then there exists a natural number $N$ and a decomposition $I=\bigsqcup_{k=1}^N I_k$, such that elements in each subfamily $\{U_i\}_{i \in I_k}$ are mutually disjoint for $k=1,\ldots, N$.
\end{lem}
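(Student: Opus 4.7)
The plan is to recast the statement as a graph-coloring problem. I would define a (simple, undirected) graph $G$ on the vertex set $I$ by declaring $i \sim j$ whenever $i \neq j$ and $U_i \cap U_j \neq \emptyset$. A partition $I = \bigsqcup_{k=1}^N I_k$ meeting the conclusion of the lemma is exactly a proper $N$-coloring of $G$, so it suffices to show that $G$ has finite chromatic number.

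The crux of the argument is a uniform degree bound on $G$. Let $D := \sup_{i \in I} \diam(U_i) < \infty$, which is finite by uniform boundedness, and let $M$ be the multiplicity constant. After dealing separately with the (trivially cooperative) indices $i$ for which $U_i = \emptyset$, I would pick a basepoint $x_i \in U_i$ for each remaining $i$, observe that $U_i \subseteq B(x_i, D)$, and conclude
$$\sharp U_i \;\leq\; K \;:=\; \sup_{x \in X} \sharp B(x, D) \;<\; \infty,$$
using bounded geometry. Now any neighbour $j$ of $i$ in $G$ contains at least one point of $U_i$, so finite multiplicity gives
$$\sharp\{j \in I : j \sim i\} \;\leq\; \sum_{x \in U_i} \sharp\{j \in I : x \in U_j\} \;\leq\; K \cdot M.$$
Thus $G$ has maximum degree at most $KM$.

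Finally, I would invoke the standard fact that a graph of maximum degree $\Delta$ admits a proper $(\Delta+1)$-coloring: in the countable case this follows by greedy coloring, and in full generality from the de Bruijn--Erd\H{o}s compactness theorem (since every finite subgraph of $G$ still has maximum degree $\leq KM$ and is therefore $(KM+1)$-colorable). Setting $N := KM + 1$ and letting $I_k$ be the preimage of colour $k$ produces the required decomposition. The only point requiring any care is this last set-theoretic step if one wishes to avoid assuming $I$ (or $X$) countable, but both ingredients are entirely classical; the real content of the lemma is the degree bound, which uses each of the three hypotheses (bounded geometry, uniform boundedness, finite multiplicity) exactly once.
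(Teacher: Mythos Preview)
Your proof is correct and follows essentially the same strategy as the paper: build the intersection graph on $I$, show it has uniformly bounded degree using bounded geometry, uniform boundedness, and finite multiplicity, and then properly colour it. The only cosmetic difference is that the paper colours by iteratively peeling off maximal independent sets (via Zorn's lemma) rather than invoking greedy colouring and de Bruijn--Erd\H{o}s, and it does not spell out the explicit bound $KM$ on the degree as you do.
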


\begin{proof}
We construct a graph $G$ whose vertex set $V$ is $\{U_i\}_{i \in I}$, and two vertices $U_i, U_j$ are connected by an edge if and only if $U_i \cap U_j \neq \emptyset$ and $i \neq j$. Since $X$ has bounded geometry and the family $\{U_i\}_{i \in I}$ is uniformly bounded and has finite multiplicity, the graph $G$ has finite valency $N$ for some $N \in \mathbb N$. Clearly, it suffices to divide $V$ into finitely many subsets such that any two vertices from the same one are not connected by an edge.

To do so, first claim that there exists a subset $V' \subseteq V$ such that vertices in $V'$ are not connected with each other, and after deleting $V'$ as well as any edge with at least one endpoint in $V'$, the remained graph $G'$ has valency at most $N-1$. In fact, $V'$ can be chosen as follows: take a vertex $v$ with valency $N$ and put it into $V'$, then delete all the edges connecting $v$ as well as $v$ itself from $G$ and consider the remained graph. It is clear that any vertex with valency $N$ in this new graph are not connected with $v$ in $G$. Repeat this procedure and by Zorn's lemma, we end up with a required $V'$. So the claim is proved.

Now we do the same procedure to the new graph $G'$ with valency at most $N-1$, and step by step, we end up with a decomposition of $V$ into at most $N$ subsets satisfying the requirement. Hence we finish the proof.
\end{proof}

\begin{proof}[Proof of Theorem \ref{char for Roe alg Thm} (1), ``(i) $\Rightarrow$ (iv)"]
By Lemma \ref{commut replace}, we may suppose that $b \in B \subseteq \B(\lp)$ satisfies the condition: for any $\varepsilon>0$, there exists some $L>0$, such that $b \in \Commut(L,\varepsilon)$. Now fix an $\varepsilon>0$, and let $L$ be as in the condition. Take an arbitrary family of metric $1$-partition of unity $\{\phi_i\}_{i \in I}$. Such a family always exists: for example, take an arbitrary disjoint bounded cover $\{U_i\}_{i \in I}$ of $X$, and take $\phi_i$ to be the characteristic function of $U_i$. We also take an $L$-Lipschitz dual family $\{\psi_i\}_{i \in I}$ of $\{\phi_i\}_{i \in I}$. Now define:
\begin{equation*}
  b_\varepsilon=
    \begin{cases}
      ~\sum_{i \in I} \phi_i b \psi_i, & \mbox{~when~} p=0 \mbox{~or~} \infty, \\
      ~\sum_{i \in I} \psi_i b \phi_i, & \mbox{~when~} p=1.
    \end{cases}
\end{equation*}
By Lemma \ref{commut converge}, each of the series above converge strongly or point-wise strongly, and $b_\varepsilon$ is a finite propagation operator in $\Blp$.

Applying Lemma \ref{bdd geo} to the cover $\{\supp(\psi_i)\}_{i \in I}$ provides a natural number $N$ and a decomposition $I=\bigsqcup_{k=1}^N I_k$ satisfying the requirement thereby. Hence for each $k=1,\ldots,N$, $\sum_{i\in I_k}\psi_i b\psi_i \in B$ since $B$ is closed under block cutdown. Consider the function $\phi^{(k)}:=\sum_{i \in I_k}\phi_i$ which belongs to $C_b(X)$, and we have that
$$\sum_{i\in I_k}\phi_i b\psi_i = \phi^{(k)} \sum_{i\in I_k}\psi_i b\psi_i$$
belongs to $B$ since $C_b(X)BC_b(X)=B$. Therefore,
$$\sum_{i \in I} \phi_i b \psi_i = \sum_{k=1}^N \big(\sum_{i\in I_k}\phi_i b\psi_i\big)$$
belongs to $B$ as well. Similarly, we have $\sum_{i \in I} \psi_i b \phi_i \in B$. Therefore, we obtain that $b_\varepsilon \in B$ for $p \in \{0,1,\infty\}$.

Finally, notice that
\begin{equation*}
  b-b_\varepsilon=
    \begin{cases}
      ~\sum_{i \in I} \phi_i\psi_i b - \sum_{i \in I} \phi_i b \psi_i = \sum_{i \in I} \phi_i[\psi_i,b], & \mbox{~when~} p=0 \mbox{~or~} \infty, \\
      ~\sum_{i \in I}  b \psi_i\phi_i- \sum_{i \in I} \psi_i b \phi_i = \sum_{i \in I} [b,\psi_i]\phi_i, & \mbox{~when~} p=1.
    \end{cases}
\end{equation*}
Hence according to Lemma \ref{norm estimate}, $\|b-b_\varepsilon\| \leq \varepsilon$, which implies that $b \in \Roe$.
\end{proof}

\section{Proof for $p \in (1,\infty)$}\label{sec:p-in-1-infty}

This section contains the proof of part (2) in Theorem \ref{char for Roe alg Thm}, (i)$\implies$(iv). Property A will appear here in two ways: To ensure the existence of metric partitions of unity with small variation, and as the metric sparsification property introduced in \cite{chen2008metric}. For the equivalence of the two formulations, see \cite{sako2014property}. Note that we only use the fact that Property A implies the metric sparsification property, which was done in an elementary way in \cite{brodzki2013uniform}.

\begin{defn}
Let $(X,d)$ be a metric space. Then $X$ has the \emph{metric sparsification property}, if for any $c \in (0,1]$, there exists a non-decreasing function $f: \mathbb N \to \mathbb N$ such that for any $m \in \mathbb N$ and any finite positive Borel measure $\mu$ on $X$, there is a Borel subset $\Omega=\bigsqcup_{i \in I}\Omega_i$ of $X$ such that
\begin{itemize}
  \item $d(\Omega_i,\Omega_j) \geq m$, whenever $i \neq j$;
  \item $\diam(\Omega_i) \leq f(m)$ for every $i \in I$;
  \item $\mu(\Omega) \geq c \mu(X)$.
\end{itemize}
\end{defn}

The following lemma is a key ingredient, which can be regarded as a \emph{commutant} version of the operator norm localisation property (see Definition \ref{ONL def} below).
\begin{lem}\label{ONL}
Assume that $X$ has the metric sparsification property, and $p \in (1,\infty)$. Then for any $\varepsilon>0$, $L>0$ and $M>0$, there exists $s>0$ such that for any operator $b \in \Blp$ with $\|b\|\leq M$ and $b \in \Commut(L,\varepsilon)$, there exists a unit vector $v \in \lp$ with $\diam(\supp v) \leq s$, and satisfying:
$$\|bv\| \geq \|b\|-6\varepsilon.$$
\end{lem}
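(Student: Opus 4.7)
My plan is to follow the blueprint of the standard operator norm localisation proof from MSP (as in Sako \cite{sako2014property} or Brodzki--Nowak--Wright--Yu \cite{brodzki2013uniform}), adapting it from finite-propagation to quasi-local operators by means of the key commutator estimate of \cite{LWZ2018,spakula2017relative}.

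The first step is to upgrade the hypothesis $b \in \Commut(L,\varepsilon)$ into the $\varepsilon$-propagation statement that $\|P_A b P_B\| < \varepsilon$ whenever $d(A,B) > 1/L$. This uses the standard Lipschitz cutoff $\phi(x) := \max(0, 1 - L\,d(x,A))$, which belongs to $C_b(X)_1$, is $L$-Lipschitz, equals $1$ on $A$ and vanishes on $B$; then $P_A b P_B = P_A b \phi P_B - P_A [b,\phi] P_B = -P_A[b,\phi] P_B$ gives the bound. With this in hand, I set the parameters: pick $c \in (0,1)$ with $M(1-c)^{1/p} \leq \varepsilon$, take $m \in \mathbb N$ with $m > 2/L$, and let $s := f(m)$, where $f$ is the MSP function at concentration $c$. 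Given $b \in \Commut(L,\varepsilon)$ with $\|b\| \leq M$, I pick a unit vector $u \in \lp$ with $\|bu\| \geq \|b\| - \varepsilon$ and apply MSP to the probability measure $\mu(\{x\}) := \|u(x)\|_E^p$. This yields $\Omega = \bigsqcup_i \Omega_i$ with $d(\Omega_i,\Omega_j) \geq m$, $\diam \Omega_i \leq s$, and $\mu(\Omega) \geq c$; setting $u_i := P_{\Omega_i} u$ and $u'' := \sum_i u_i$, we have $\|u - u''\|^p \leq 1 - c$, hence $\|bu''\| \geq \|b\| - 2\varepsilon$, and $\sum_i \|u_i\|^p = \|u''\|^p \leq 1$.

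The crux is to prove $\bigl(\sum_i \|bu_i\|^p\bigr)^{1/p} \geq \|b\| - 6\varepsilon$. Once this holds, the weighted-average inequality $\max_i \|bu_i\|^p/\|u_i\|^p \geq \sum_i \|bu_i\|^p / \sum_i \|u_i\|^p$ produces some $i_0$ for which $v := u_{i_0}/\|u_{i_0}\|$ is a unit vector supported in $\Omega_{i_0}$ (so $\diam(\supp v) \leq s$) satisfying $\|bv\| \geq \|b\| - 6\varepsilon$, which is the conclusion. To establish the crux, I split $bu'' = \sum_i P_{\N_{1/L}(\Omega_i)} bu_i + E$, where $E := \sum_i P_{X \setminus \N_{1/L}(\Omega_i)} bu_i$ is the ``off-diagonal error''. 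Since $m > 2/L$, the neighbourhoods $\N_{1/L}(\Omega_i)$ are pairwise disjoint, so $\bigl\|\sum_i P_{\N_{1/L}(\Omega_i)} bu_i\bigr\|^p = \sum_i \|P_{\N_{1/L}(\Omega_i)} bu_i\|^p \leq \sum_i \|bu_i\|^p$. Therefore $\bigl(\sum_i \|bu_i\|^p\bigr)^{1/p} \geq \|bu''\| - \|E\|_p \geq \|b\| - 2\varepsilon - \|E\|_p$, so everything reduces to showing $\|E\|_p \leq 4\varepsilon$.

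The main obstacle is precisely this bound on $\|E\|_p$: each summand satisfies $\|P_{X\setminus\N_{1/L}(\Omega_i)} bu_i\| \leq \varepsilon\|u_i\|$ by the Step~1 $\varepsilon$-propagation, so a naive triangle-inequality bound gives only $\|E\|_p \leq \varepsilon (\#I)^{1/p}$, which is too weak (and indeed useless when $\#I$ can be arbitrarily large). To beat this, I would invoke the key technical lemma from \cite{LWZ2018,spakula2017relative} (cf.~Proposition \ref{estimate for commut}), which combines the H\"{o}lder duality characteristic of $p \in (1,\infty)$ with the pairwise disjointness of the $\Omega_i$'s and of the $\N_{1/L}(\Omega_i)$'s, together with the $\varepsilon$-propagation, to produce an $\#I$-independent bound $\|E\|_p \leq C\varepsilon$; tracking constants carefully and absorbing them into the $6\varepsilon$ slack completes the argument. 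This is precisely the step where both the restriction $p \in (1,\infty)$ and the metric sparsification property are essential.
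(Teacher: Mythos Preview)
Your overall strategy matches the paper's proof: both use MSP to extract $\Omega = \bigsqcup_i \Omega_i$, lose one $\varepsilon$ for near-optimality of the test vector and one for $M(1-c)^{1/p}$, then reduce everything to an inequality of the shape $(\sum_i \|b u_i\|^p)^{1/p} \geq \|b u''\| - 3\varepsilon$, after which a weighted-mean argument finishes. The gap is your last paragraph. You correctly name Proposition~\ref{estimate for commut} as the key, but you do not explain how it yields $\|E\|_p \leq C\varepsilon$, and as stated it does not directly apply: that Proposition controls a \emph{symmetric} block-cutdown $eae - \sum_i e_i a e_i$ with the same contractions on both sides, whereas your $E = \sum_i P_{X\setminus\N_{1/L}(\Omega_i)}\, b\, P_{\Omega_i} u$ has different, non-Lipschitz cutoffs on the two sides. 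Your Step~1 $\varepsilon$-propagation bound handles each summand, but nothing you have written converts this into an $\#I$-free estimate on the sum; the invocation of the Proposition is a black box at exactly the point where the real work lies.

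The paper's fix is to work from the outset with $L$-Lipschitz bumps $f_i \in C_b(X)_1$ satisfying $f_i|_{\Omega_i}=1$ and $\supp f_i \subseteq \N_{1/L}(\Omega_i)$; this also forces the separation to be $>4/L$ rather than your $>2/L$, so that the $\supp f_i$ are $2/L$-disjoint (the hypothesis of Proposition~\ref{estimate for commut}) and $f := \sum_i f_i$ is itself an $L$-Lipschitz contraction. Three short estimates then replace your handwave: (i) $\|bu''\| \leq \|fbu''\| + \varepsilon\|u''\|$, since $(1-f)bu'' = [1-f,b]u''$; (ii) $\|fbu''\| = \|fbfu''\| \leq \bigl\|\sum_i f_i b f_i u''\bigr\| + \varepsilon\|u''\|$, which is precisely where Proposition~\ref{estimate for commut} enters; (iii) $\bigl\|\sum_i f_i b u_i\bigr\| = (\sum_i\|f_i b u_i\|^p)^{1/p} \leq (\sum_i \|bu_i\|^p)^{1/p} + \varepsilon\|u''\|$, using the triangle inequality in $\ell^p(I,\lp)$ together with $\|(1-f_i)bu_i\| = \|[1-f_i,b]u_i\| < \varepsilon\|u_i\|$. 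The idea missing from your write-up is exactly this passage from projections to Lipschitz cutoffs; without it neither the hypothesis $b\in\Commut(L,\varepsilon)$ nor the Proposition can be brought to bear on $E$.
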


The proof is inspired by \cite[Proposition 4.1]{chen2008metric} together with the following technical result introduced first \cite{spakula2017relative} in the case of $p=2$ and later generalised \cite{LWZ2018} to $p \in [1,\infty)$, which plays an key role here. Briefly speaking, it provides an approach to decompose operators into blocks.

\begin{prop}[\cite{LWZ2018,spakula2017relative}]\label{estimate for commut}
For $p \in [1,\infty)$, let $a$ be an operator on $\lp$ and $a \in \Commut(L,\varepsilon)$ for some $L,\varepsilon>0$. Let $(e_i)_{i\in I}$ be a family of positive contractions in $C_b(X)$ with $2/L$-disjoint supports, and define $e:=\sum_{i\in I} e_i$. Then we have
$$\big\|eae-\sum_{i\in I}e_iae_i\big\| \leq \varepsilon.$$
\end{prop}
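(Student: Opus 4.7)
The plan is to manufacture a single $L$-Lipschitz element of $C_b(X)_1$ that encodes the block decomposition and to exploit the commutator estimate $\|[a,\phi]\|<\varepsilon$ for each such $\phi$. Concretely, for each $i\in I$ I would introduce the tent function
\[
\phi_i(x):=\max\bigl(0,\ 1-L\cdot d(x,\supp(e_i))\bigr),
\]
which lies in $C_b(X)_1$, is $L$-Lipschitz, equals $1$ on $\supp(e_i)$, and vanishes outside the closed $(1/L)$-neighbourhood of $\supp(e_i)$. The $(2/L)$-disjointness hypothesis then guarantees that the open sets $\{\phi_i>0\}$ are pairwise disjoint, so at most one $\phi_i$ is nonzero at any point of $X$.

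For each finite $J\subseteq I$ and each sign choice $\sigma\in\{-1,+1\}^J$, set $\phi_\sigma:=\sum_{i\in J}\sigma_i\phi_i$ and $e_J:=\sum_{i\in J}e_i$. Since the $\{\phi_i>0\}$ are disjoint, $\|\phi_\sigma\|_\infty\leq 1$. The only nontrivial case in verifying $L$-Lipschitzness is $x\in\{\phi_i>0\}$, $y\in\{\phi_j>0\}$ with $i\neq j$: one controls $|\phi_\sigma(x)-\phi_\sigma(y)|\leq\phi_i(x)+\phi_j(y)$ and uses the $(2/L)$-disjointness of supports together with the definitions of $\phi_i,\phi_j$ to bound this by $L\cdot d(x,y)$. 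Consequently $\phi_\sigma\in C_b(X)_1$ is $L$-Lipschitz, and the hypothesis $a\in\Commut(L,\varepsilon)$ gives $\|[a,\phi_\sigma]\|<\varepsilon$. The crucial bookkeeping properties are $\phi_\sigma e_i=\sigma_i e_i$ and $\phi_\sigma^2 e_i=e_i$ for $i\in J$, which follow from $\phi_i|_{\supp(e_i)}\equiv 1$ and the disjoint-support relation $\phi_i e_j=\delta_{ij} e_j$.

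With these identities in hand I would evaluate $e_J\phi_\sigma a\phi_\sigma e_J$ in two ways. Directly,
\[
e_J\phi_\sigma a\phi_\sigma e_J=\sum_{i,j\in J}\sigma_i\sigma_j\, e_i a e_j.
\]
On the other hand, using $\phi_\sigma a=a\phi_\sigma-[a,\phi_\sigma]$ and $e_J\phi_\sigma^2 e_J=e_J$,
\[
e_J\phi_\sigma a\phi_\sigma e_J = e_J a e_J - e_J[a,\phi_\sigma]\phi_\sigma e_J.
\]
Averaging the identity over $\sigma\in\{-1,+1\}^J$ kills the off-diagonal terms $i\neq j$ on the left (since $2^{-|J|}\sum_\sigma \sigma_i\sigma_j=\delta_{ij}$), yielding
\[
\sum_{i\in J}e_i a e_i \;=\; e_J a e_J \;-\; 2^{-|J|}\sum_\sigma e_J[a,\phi_\sigma]\phi_\sigma e_J,
\]
and hence, using $\|e_J\|\leq 1$, $\|\phi_\sigma\|\leq 1$, $\|[a,\phi_\sigma]\|<\varepsilon$,
\[
\bigl\|e_J a e_J - \textstyle\sum_{i\in J}e_i a e_i\bigr\|\leq \varepsilon.
\]

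To pass to the infinite sum, I would invoke the strong operator convergence of the block cutdown $\sum_{i\in I} e_i a e_i$ (provided by Definition/Lemma \ref{block cut down}) together with the strong convergence $e_J\to e$, and take the limit of the previous inequality along an exhaustion $J\uparrow I$, obtaining $\|eae-\sum_i e_i a e_i\|\leq\varepsilon$. The main subtlety is the $L$-Lipschitz verification of $\phi_\sigma$ across different tent regions — it is there that the $(2/L)$-disjointness of the supports is used in an essential way, trading the separation budget of $2/L$ for the $1/L$-wide slope of each tent. Everything else is algebraic manipulation plus the averaging identity, which works uniformly in $p\in[1,\infty)$ because it relies only on operator norms, multiplication by $C_b(X)$, and strong convergence of block cutdowns.
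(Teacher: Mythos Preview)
Your argument is correct. Note that the paper does not itself prove this proposition --- it simply quotes the result from \cite{LWZ2018,spakula2017relative} --- so there is no in-paper proof to compare against. Your tent-function construction together with the random-sign averaging over $\{-1,+1\}^J$ is in fact precisely the method used in those references (introduced in \cite{spakula2017relative} for $p=2$ and carried over verbatim to general $p$ in \cite{LWZ2018}), so you have recovered the original argument.

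One small slip worth flagging: the identity you record as ``$e_J\phi_\sigma^2 e_J=e_J$'' is false as written (the left-hand side equals $e_J^2$, and the $e_i$ need not be idempotent). What your computation actually uses --- and what is true --- is $\phi_\sigma^2 e_J=e_J$, since $\phi_\sigma^2=\sum_{j\in J}\phi_j^2$ equals $1$ on each $\supp(e_i)$ for $i\in J$. With that correction the chain
\[
e_J\phi_\sigma a\phi_\sigma e_J \;=\; e_J a(\phi_\sigma^2 e_J) - e_J[a,\phi_\sigma]\phi_\sigma e_J \;=\; e_J a e_J - e_J[a,\phi_\sigma]\phi_\sigma e_J
\]
is exactly right, the averaging and the strong-limit passage go through as you describe, and the bound $\leq\varepsilon$ follows.
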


\begin{proof}[Proof of Lemma \ref{ONL}]
Let us fix $\varepsilon,L,M>0$, and the proof is divided into two steps.

\emph{Step I.} Suppose a vector $v \in \lp$ has the form $v=\sum_{i \in I}v_i$ with
$$d(\supp(v_i),\supp(v_j)) > \frac{4}{L}.$$
The aim of this step is to prove the inequality \eqref{EQ5'} below.

For each $i\in I$, set $Y_i:=\supp(v_i)$ and take a positive $L$-Lipschitz contraction $f_i\in C_b(X)$ with $f_i|_{Y_i}=1$ and $\supp(f_i) \subseteq \N_{1/L}(Y_i)$. Hence, the family $\{\supp(f_i):i\in I\}$ is $2/L$-disjoint. Taking $f=\sum_{i\in I}f_i \in C_b(X)_1$, then $f$ is $L$-Lipschitz and $fv=v$. Since $b \in \Commut(L,\varepsilon)$ and $1-f$ is $L$-Lipschitz, we have
\begin{equation}\label{EQ1}
\|bv\| \leq \|fbv\| + \|[1-f,b]v\| + \|b(1-f)v\| \leq \|fbv\| + \varepsilon\|v\|.
\end{equation}
Applying Proposition \ref{estimate for commut} to $\{f_i\}_{i\in I}$ and the operator $b$, we have $\|fbf - \sum_{i \in I}f_i b f_i\| \leq \varepsilon$, which implies that
\begin{equation}\label{EQ2}
\|fbv\|=\|fbfv\| \leq \big\|\sum_{i \in I}f_i b f_iv\big\| + \varepsilon\|v\| = \big\|\sum_{i \in I}f_i bv_i\big\| + \varepsilon\|v\|. \end{equation}
Using the triangle inequality (in the space $\ell^p(I,\lp)$), and the fact that $\{f_i\}_{i \in I}$ have mutually disjoint supports, we have
\begin{eqnarray}\label{EQ3}
\big\|\sum_{i \in I}f_i b v_i\big\|& = & \big(\sum_{i\in I}\|f_ibv_i\|^p\big)^{\frac1p} = \big(\sum_{i\in I}\|bv_i-(1-f_i)bv_i\|^p\big)^{\frac1p} \nonumber \\
& \leq & \big(\sum_{i\in I} \|bv_i\|^p\big)^{\frac{1}{p}} + \big(\sum_{i\in I} \|(1-f_i) bv_i\|^p\big)^{\frac{1}{p}} \nonumber \\
& \leq & \big(\sum_{i\in I} \|bv_i\|^p\big)^{\frac{1}{p}} + \big(\sum_{i\in I} \varepsilon^p \|v_i\|^p\big)^{\frac{1}{p}} \nonumber \\
& = & \big(\sum_{i\in I} \|bv_i\|^p\big)^{\frac{1}{p}} + \varepsilon \|v\|,
\end{eqnarray}
where $\|[1-f_i,b]\|< \varepsilon$ is used in the third line. Combining (\ref{EQ1})$\sim$(\ref{EQ3}), we have
\begin{equation}\label{EQ4}
(\|bv\|- 3\varepsilon \|v\|)^p \leq \sum_{i\in I} \|bv_i\|^p.
\end{equation}

Now we claim:
\begin{equation}\label{EQ5'}
\frac{\|bv\|}{\|v\|} \leq \sup_{i \in I} \frac{\|bv_i\|}{\|v_i\|} + 3\varepsilon.
\end{equation}
If it were not true, then for any $i \in I$:
$$\frac{(\|bv\|-3\varepsilon\|v\|)^p}{\|v\|^p} > \frac{\|bv_i\|^p}{\|v_i\|^p}.$$
Combining with (\ref{EQ4}), we have
$$(\|bv\|- 3\varepsilon \|v\|)^p  \leq  \sum_{i\in I} \|bv_i\|^p < \sum_{i\in I} \frac{(\|bv\|-3\varepsilon\|v\|)^p\|v_i\|^p}{\|v\|^p} = (\|bv\|-3\varepsilon\|v\|)^p,$$
which is a contradiction.

\emph{Step II.} Since $X$ has the metric sparsification property, for any $c \in (0,1)$, there exists a function $f: \mathbb N \to \mathbb N$ such that for any finite positive Borel measure $\mu$ on $X$, there exists a decomposition $\Omega=\bigsqcup_{i\in I} \Omega_i \subseteq X$, satisfying: $d(\Omega_i, \Omega_j) > 4/L$ for any $i \neq j$, $\diam(\Omega_i) \leq f(4/L)$, and $\mu(\Omega) \geq c\mu(X)$. For any $w \in \lp \setminus \{0\}$, we define a measure $\mu$ on $X$ by $\mu(\{x\})=\|w(x)\|^p$, and let $\Omega=\bigsqcup_{i\in I} \Omega_i$ be the associated decomposition satisfying the above conditions. Then we have
$$\|bw-bP_\Omega w\|^p \leq \|b\|^p \cdot \|w-P_\Omega w\|^p = \|b\|^p \mu(X \setminus \Omega) \leq M^p(1-c)\|w\|^p,$$
which implies that
\begin{equation}\label{EQ6}
\|bP_\Omega w\| \geq \|bw\| - M(1-c)^{\frac{1}{p}}\|w\|.
\end{equation}
Note that $P_\Omega w = \sum_{i\in I}P_{\Omega_i}w$ has the form in Step I. Hence by (\ref{EQ5'}) and (\ref{EQ6}), we have:
\begin{equation*}
\sup_{i \in I} \frac{\|bP_{\Omega_i}w\|}{\|P_{\Omega_i}w\|} +3\varepsilon \geq \frac{\|bP_\Omega w\|}{\|P_\Omega w\|} \geq \frac{\|bw\| - M(1-c)^{\frac{1}{p}}\|w\|}{\|w\|} = \frac{\|bw\|}{\|w\|} - M(1-c)^{\frac{1}{p}}.
\end{equation*}
Now take a vector $w \in \lp\setminus \{0\}$ such that $\frac{\|bw\|}{\|w\|} \geq \|b\|-\varepsilon$, then we have
\begin{equation}\label{EQ7}
\sup_{i \in I} \frac{\|bP_{\Omega_i}w\|}{\|P_{\Omega_i}w\|} \geq \|b\| - 4\varepsilon  - M(1-c)^{\frac{1}{p}}.
\end{equation}
Finally, notice that $\lim_{c \to 1}M(1-c)^{\frac{1}{p}}=0$, we may take $c \in (0,1)$ at the beginning of Step II satisfying $M(1-c)^{\frac{1}{p}} < \varepsilon$, and let $f$ be the associated function thereby. Then there exists some $i \in I$, such that
$$\frac{\|bP_{\Omega_i}w\|}{\|P_{\Omega_i}w\|} \geq \|b\| - 6\varepsilon,$$
with $\diam(\supp(P_{\Omega_i}w)) \leq f(4/L)$. Setting $s:=f(4/L)$ and $v= \frac{P_{\Omega_i}w}{\|P_{\Omega_i}w\|}$, we finish the proof.
\end{proof}

To prove the main theorem, we would like to approximate an operator $b \in \Blp$ by block diagonal operators in the form of $\sum_{i \in I}\phi_i^{p/q}b\phi_i$ coming from Lemma \ref{commut converge}(1)(a), where $\{\phi_i\}_{i \in I}$ is a metric $p$-partition of unity and $q$ is the conjugate exponent of $p$. To estimate their difference, we calculate as follows:
\begin{equation}\label{EQ8}
\sum_{i \in I} \phi_i^{p/q}b\phi_i - b = \sum_{i \in I} \phi_i^{p/q}b\phi_i - \sum_{i \in I}\phi_i^{p/q+1}b =\sum_{i \in I} \phi_i^{p/q}[b,\phi_i],
\end{equation}
which converges strongly as well. Furthermore, we have the following uniform control:

\begin{lem}\label{unif commut}
Let $b \in \Commut(L,\varepsilon)$ for some $L,\varepsilon>0$ and $p \in (1,\infty)$. Then for any metric $p$-partition of unity $\{\phi_i\}_{i \in I}$, the operator $\sum_{i \in I} \phi_i^{p/q}[b,\phi_i]$ belongs to $\Commut(L,2\varepsilon)$.
\end{lem}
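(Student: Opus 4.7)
The plan is to pass the commutator through the sum and exploit the Jacobi identity. Fix an $L$-Lipschitz $f \in C_b(X)_1$ and set $T := \sum_{i \in I}\phi_i^{p/q}[b,\phi_i]$, which converges strongly by Lemma \ref{commut converge}(1)(a) together with the identity $\sum_i \phi_i^p = 1$. Since $f$ is a multiplication operator, it commutes with each $\phi_i^{p/q}$; combining this with the Jacobi identity and $[\phi_i,f]=0$ gives termwise
\[
[f,\phi_i^{p/q}[b,\phi_i]] \;=\; \phi_i^{p/q}[f,[b,\phi_i]] \;=\; \phi_i^{p/q}[[f,b],\phi_i].
\]
Writing $c := [f,b]$, one therefore expects
\[
[f,T] \;=\; \sum_{i\in I}\phi_i^{p/q}c\phi_i \;-\; \sum_{i\in I}\phi_i^{p/q+1}c.
\]

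Two elementary observations close the argument. The identity $1/p+1/q=1$ yields $p/q+1 = p$, so $\sum_i\phi_i^{p/q+1} = \sum_i\phi_i^p = 1$ and the second sum collapses to $c$ itself. For the first sum, I apply Lemma \ref{commut converge}(1)(a) with the single operator $c$ playing the role of the whole family $\{b_i\}_{i\in I}$: this shows that $\sum_i\phi_i^{p/q}c\phi_i$ converges strongly to a (finite propagation) operator of norm at most $\|c\|$. Since $b \in \Commut(L,\varepsilon)$ gives $\|c\| = \|[f,b]\| < \varepsilon$, the triangle inequality yields $\|[f,T]\| \leq 2\|c\| < 2\varepsilon$, which is exactly $T \in \Commut(L,2\varepsilon)$.

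There is no genuine obstacle here: the argument is essentially algebraic bookkeeping supported by one invocation of Lemma \ref{commut converge}(1)(a). The only minor care needed is in justifying that the strong-operator manipulations are legitimate, i.e.\ that $[f,T]$ really coincides with the strong-operator limit of the commutators of partial sums; this follows from the fact that $\sum_i \phi_i^{p/q}[b,\phi_i]$ and $\sum_i \phi_i^{p/q}c\phi_i$ both converge strongly by the same argument, and multiplication by the bounded operator $f$ is strongly continuous on bounded sets.
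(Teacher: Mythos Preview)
Your proof is correct and essentially identical to the paper's: both arrive at $[f,T]=\sum_i\phi_i^{p/q}[f,b]\phi_i-[f,b]$ and then invoke Lemma~\ref{commut converge}(1)(a) on the first sum. The only cosmetic difference is that the paper rewrites $T=\sum_i\phi_i^{p/q}b\phi_i-b$ via~\eqref{EQ8} and commutes $f$ through that, whereas you push the commutator inside termwise via the Jacobi identity; the resulting expression and estimate are the same.
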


\begin{proof}
For any $L$-Lipschitz contraction $f\in C_b(X)$, (\ref{EQ8}) implies that
$$\big[\sum_{i \in I} \phi_i^{p/q}[b,\phi_i] , f\big] = \big[\sum_{i \in I} \phi_i^{p/q}b\phi_i , f\big] - [b,f] = \sum_{i \in I} \phi_i^{p/q}[b,f]\phi_i - [b,f].$$
Since $b \in \Commut(L,\varepsilon)$, then $\|[b,f]\|<\varepsilon$. Hence by Lemma \ref{commut converge}(1)(a), we have
$$\big\| \sum_{i \in I} \phi_i^{p/q}[b,f]\phi_i \big\| < \varepsilon$$
Therefore,
$$\big\| \big[\sum_{i \in I} \phi_i^{p/q}[b,\phi_i] , f\big] \big\| < \big\| \sum_{i \in I} \phi_i^{p/q}[b,f]\phi_i \big\| + \|[b,f]\| < \varepsilon + \varepsilon = 2\varepsilon,$$
which finishes the proof.
\end{proof}

\begin{proof}[Proof of Theorem \ref{char for Roe alg Thm} (2), (i)$\implies$(iv)]
Fixing an operator $b \in B$ satisfying condition (i) and $\varepsilon>0$, we aim to $\varepsilon$-approximate $b$ by an operator in $B$ with finite propagation.

Setting $M=\|b\|$, there exists $L>0$ such that $b \in \Commut(L,\frac{\varepsilon}{\max\{4M,24\}})$ by Lemma \ref{commut replace}. Applying Lemma \ref{ONL} to $\varepsilon/12, L, 2M$, we obtain an $s>0$ such that for any operator $a \in \Blp$ with $\|a\|\leq 2M$ and $a \in \Commut(L,\varepsilon/12)$, there exists a unit vector $v \in \lp$ with $\diam(\supp v) \leq s$, and satisfying $\|av\| \geq \|a\|-\varepsilon/2$. Since $X$ has bounded geometry, the number $K:=\sup_{z\in X} \sharp B(z,s+\frac{1}{L})$ is finite.

Now we may take a metric $p$-partition of unity $\{\phi_i\}_{i \in I}$ with $(s+\frac{2}{L},\frac{\varepsilon}{4MK})$-variation since $X$ has Property A, and consider the series
$$b':=\sum_{i \in I} \phi_i^{p/q}b\phi_i.$$
By Lemma \ref{commut converge}(1)(a), it converges strongly in $\Blp$ with $\|b'\| \leq \|b\| \leq M$ and $b'$ has finite propagation. Hence the operator $a:=b'-b$ has norm at most $2M$, and $a=\sum_{i \in I}\phi_i^{p/q}[b,\phi_i]$ by (\ref{EQ8}). Furthermore by Lemma \ref{unif commut}, $a \in \Commut(L,\varepsilon/12)$. Hence there exists a unit vector $v \in \lp$ with $\diam(\supp v) \leq s$ and satisfying
\begin{equation}\label{eq:EQ12}
\|av\| \geq \|a\|-\varepsilon/2.
\end{equation}
Take $F:=\supp(v)$, we have:
\begin{equation}\label{eq:EQ11}
\|av\| \leq \|\chi_{\N_{1/L}(F)}av\| + \|\chi_{\N_{1/L}(F)^c}a\chi_Fv\| \leq \|\chi_{\N_{1/L}(F)}a\chi_Fv\| + \varepsilon/12.
\end{equation}
Now for any $x\in \N_{1/L}(F)$, we calculate:
\begin{eqnarray}\label{eq:EQ9}
  \| (av)(x) \| &=& \big\| \sum_{y \in F}a_{xy}v(y) \big\| = \big\| \sum_{y\in F} \sum_{i \in I} \phi_i^{p/q}(x)([b,\phi_i])_{xy}v(y) \big\| \nonumber\\
   &=&  \big\| \sum_{y\in F} \sum_{i \in I} \phi_i^{p/q}(x)b_{xy}(\phi_i(y)-\phi_i(x))v(y) \big\|.
\end{eqnarray}
For each $y \in F$, we calculate (using the H\"older inequality on the second line):
\begin{eqnarray}\label{eq:EQ10}
  &&\big\| \sum_{i \in I} \phi_i^{p/q}(x)b_{xy}(\phi_i(y)-\phi_i(x))\big\| \leq M\sum_{i \in I}\phi_i^{p/q}(x) |\phi_i(y)-\phi_i(x)| \nonumber \\
   &\leq & M \big( \sum_{i \in I} \phi_i^{p}(x)\big)^{1/q} \cdot \big( \sum_{i \in I} |\phi_i(y)-\phi_i(x)|^p\big)^{1/p} \leq M \cdot \frac{\varepsilon}{4MK} = \frac{\varepsilon}{4K}
\end{eqnarray}
since $d(x,y) \leq s+\frac{2}{L}$, and $\{\phi_i\}_{i \in I}$ has $(s+\frac{2}{L},\frac{\varepsilon}{4MK})$-variation. Combining \eqref{eq:EQ9} with \eqref{eq:EQ10}, we have (again using the H\"older inequality on the second line)
\begin{eqnarray*}
\| (a\chi_Fv)(x) \| & \leq & \sum_{y\in F} \big\| \sum_{i \in I} \phi_i^{p/q}(x)b_{xy}(\phi_i(x)-\phi_i(y)) \big\|\cdot\|v(y)\| \\
&\leq & \frac{\varepsilon}{4K}\sum_{y\in F}\|v(y)\| \leq \frac{\varepsilon}{4K} (\sharp F)^{1/q} \cdot \big( \sum_{y \in F} \|v(y)\|^p\big)^{1/p}\\
& \leq & \frac{\varepsilon K^{1/q}}{4K} \|v\| = \frac{\varepsilon K^{1/q}}{4K}.
\end{eqnarray*}
Combining with \eqref{eq:EQ11}, we have
$$\|av\|\leq \|\chi_{\N_{1/L}(F)}a\chi_Fv\| + \frac{\varepsilon}{12} =\big( \sum_{x\in \N_{1/L}(F)}\|(a\chi_Fv)(x)\|^p \big)^{1/p} + \frac{\varepsilon}{12} \leq \frac{\varepsilon K^{1/q}}{4K} \cdot K^{1/p} + \frac{\varepsilon}{12} < \frac{\varepsilon}{2}. $$
Therefore, from \eqref{eq:EQ12}, we obtain that
$$\|a\| \leq \|av\| + \frac{\varepsilon}{2} < \frac{\varepsilon}{2} + \frac{\varepsilon}{2}=\varepsilon.$$

Finally, we prove that $b' \in B$. Applying Lemma \ref{bdd geo} to the cover $\{\supp(\phi_i)\}_{i \in I}$ provides a natural number $N$ and a decomposition $I=\bigsqcup_{k=1}^N I_k$ satisfying the requirement thereby. Hence for each $k=1,\ldots,N$, $\sum_{i\in I_k}\phi_i b\phi_i$ and $\sum_{i\in I_k}\phi_i^{p/q} b\phi_i^{p/q}$ belong to $B$ since it is closed under block cutdown. Therefore if $p=2$: $\sum_{i \in I_k} \phi_i^{p/q}b\phi_i = \sum_{i\in I_k}\phi_i b\phi_i \in B$. If $p\in (1,2)$: consider the function $\phi^{(k)}:=\sum_{i \in I_k}\phi_i^{1-p/q} \in C_b(X)$, and we have that $\sum_{i\in I_k}\phi_i^{p/q}b\phi_i = \big(\sum_{i\in I_k}\phi_i^{p/q} b\phi_i^{p/q}\big) \phi^{(k)}$ belongs to $B$ since $C_b(X)BC_b(X)=B$. If $p\in (2,\infty)$: consider the function $\phi^{(k)}:=\sum_{i \in I_k}\phi_i^{p/q-1} \in C_b(X)$, and we have that $\sum_{i\in I_k}\phi_i^{p/q}b\phi_i = \phi^{(k)} \big(\sum_{i\in I_k}\phi_i b\phi_i\big)$ belongs to $B$ as well. Therefore,
$$b'=\sum_{i \in I} \phi_i^{p/q}b\phi_i = \sum_{k=1}^N \big(\sum_{i\in I_k}\phi_i^{p/q}b\phi_i\big)$$
belongs to $B$ in all cases. So we finish the whole proof.
\end{proof}

\section{Applications and Questions}\label{sec:apps-and-qs}

\subsection{A characterisation of Property A}\label{subsec:charact-of-A}

The aim of this subsection is to prove that the conclusion of Lemma \ref{ONL} for $p=2$ and $E=\mathbb{C}$ is actually equivalent to the operator norm localisation property, introduced by Chen, Tessera, Wang and Yu \cite{chen2008metric}. Therefore, it provides an alternative approach to characterise Property A by a result of Sako \cite{sako2014property}.

We recall a simplified version of the original definition for the operator norm localisation property; their equivalence is proved in \cite[Proposition 3.1]{sako2014property}.

\begin{defn}\label{ONL def}
Let $X$ be a metric space with bounded geometry. We say $X$ has the \emph{operator norm localisation} property, if for any $c\in (0,1)$ and $R>0$, there exists $S>0$ such that for any $a \in \B(\ell^2(X))$ with propagation at most $R$ and norm $1$, there exists a unit vector $v \in \ell^2(X)$ satisfying $\diam(\supp(v)) \leq S$ and $\|av\| \geq c$.
\end{defn}

Now we need the following auxiliary lemma.
\begin{lem}\label{bd to commut}
For any $\varepsilon>0$ and $R>0$, there exists a constant $L>0$, such that for any operator $a$ in $\B(\ell^2(X))$ with propagation at most $R$ and norm $1$, we have $a \in \Commut(L,\varepsilon)$.
\end{lem}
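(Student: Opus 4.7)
The plan is to reduce the problem to estimating commutators of Lipschitz functions with partial translations, using the decomposition of finite-propagation operators from Lemma \ref{dec of BO}.

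First I would apply Lemma \ref{dec of BO} to write $a = \sum_{k=1}^N f_k V_k$, where $N = \sup_{x\in X}\sharp B(x,R)$ (finite by bounded geometry), each $f_k$ is a multiplication operator with $\|f_k\| \leq \sup_{x,y}|a_{xy}| \leq \|a\| = 1$, and each $V_k$ is a partial translation of propagation at most $R$. Since both $f$ and $f_k$ act as multiplication operators, they commute, and hence
\begin{equation*}
[a,f] = \sum_{k=1}^N [f_k V_k, f] = \sum_{k=1}^N f_k [V_k, f],
\end{equation*}
so $\|[a,f]\| \leq N \max_k \|[V_k,f]\|$.

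Next I would bound $\|[V_k,f]\|$ for a fixed partial translation $V_k$ associated to a bijection $t_k\colon D_k \to R_k$ with $\sup_{x\in D_k} d(x,t_k(x)) \leq R$. A direct computation on $\xi \in \ell^2(X)$ gives
\begin{equation*}
([V_k,f]\xi)(x) = \bigl(f(x) - f(t_k^{-1}(x))\bigr)\xi(t_k^{-1}(x))
\end{equation*}
for $x \in R_k$, and zero otherwise. If $f$ is $L$-Lipschitz, then $|f(x)-f(t_k^{-1}(x))| \leq LR$, and since $t_k$ is a bijection, summing $|\xi(t_k^{-1}(x))|^2$ over $x \in R_k$ gives at most $\|\xi\|^2$. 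Hence $\|[V_k,f]\| \leq LR$.

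Combining the two bounds yields $\|[a,f]\| \leq NLR$ for every $L$-Lipschitz contraction $f$, so it suffices to take $L := \varepsilon/(2NR)$ (with $N$ and $R$ both fixed by the hypotheses). There is no real obstacle here; the only thing worth being careful about is the bound $\|f_k\| \leq \|a\|$, which follows because the entries of the diagonal part $f_k$ of $a$ are controlled by matrix entries of $a$, and these in turn are bounded by $\|a\|$.
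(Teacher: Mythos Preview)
Your proof is correct and follows essentially the same route as the paper: decompose via Lemma~\ref{dec of BO}, reduce $[a,f]$ to $\sum_k f_k[V_k,f]$, compute the commutator with a partial translation explicitly, and use the Lipschitz bound $|f(x)-f(t_k^{-1}(x))|\leq LR$ to get $\|[V_k,f]\|\leq LR$, hence $\|[a,f]\|\leq NLR$. The paper chooses $L=\varepsilon/(NR)$ rather than your $\varepsilon/(2NR)$; your extra factor of $2$ is harmless (and in fact slightly cleaner, since $\Commut(L,\varepsilon)$ demands a strict inequality).
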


\begin{proof}
For $a \in \B(\ell^2(X))$ with propagation at most $R$, take $N:=\sup_{x\in X} \sharp B(x,R)$. By Lemma \ref{dec of BO}, there exist multiplication operators $f_1,\ldots,f_N$ with $\|f_k\| \leq 1$, and partial translation operators $V_1,\ldots,V_N$ of propagation at most $R$ such that $a=\sum_{k=1}^N f_kV_k$. Denote by $t_k:D_k\to R_k$ the partial translation corresponding to $V_k$ for each $k$. Taking $L=\frac{\varepsilon}{NR}$ and for any $L$-Lipschitz contraction $f\in C_b(X)$, we have
$$[a,f]= \sum_{k=1}^N f_k[V_k,f].$$
For each $k=1,\ldots, N$, $v \in \ell^2(X)$ and $x\in X$, we calculate:
\begin{equation}\label{EQ5}
\big( [f,V_k]v \big)(x)=
\begin{cases}
  ~\big(f(x)-f(t_k^{-1}(x))\big)v(t_k^{-1}(x)), & x\in R_k; \\
  ~0, & \mbox{otherwise}.
\end{cases}
\end{equation}
Hence,
$$\|[f,V_k]v\| = \big(\sum_{x\in R_k} \|(f(x)-f(t_k^{-1}(x))v(t_k^{-1}(x))\|^2\big)^{1/2} \leq \frac{\varepsilon}{N} \|v\|,$$
since $f$ is $\frac{\varepsilon}{NR}$-Lipschitz and $d(x,t_k^{-1}(x)) \leq R$. Therefore,
$$\|[a,f]\| \leq \sum_{k=1}^N \|[V_k,f]\| \leq N \cdot \frac{\varepsilon}{N} = \varepsilon,$$
which implies that $a \in \Commut(L,\varepsilon)$. We finish the proof.
\end{proof}

Now we present and prove the following characterisation for Property A.
\begin{prop}\label{prop:charact-of-A}
Let $(X,d)$ be a metric space with bounded geometry. The following are equivalent:
\begin{enumerate}[(1)]
  \item $X$ has Property A;
  \item $X$ has the metric sparsification property;
  \item\label{item:char-of-A-eps} For any $\varepsilon>0$, $L>0$ and $M>0$, there exists $s>0$, such that for any operator $b \in \B(\ell^2(X))$ with $\|b\|\leq M$ and $b \in \Commut(L,\frac{\varepsilon}{6})$, there exists a unit vector $v \in \ell^2(X)$ with $\diam(\supp v) \leq s$, and satisfying $\|bv\| \geq \|b\|-\varepsilon$.
  \item\label{item:char-of-A-c} For any $c\in (0,1)$, $L>0$, there exists $s>0$, such that for any operator $b \in \B(\ell^2(X))$ with $\|b\|=1$ and $b \in \Commut(L, \frac{1-c}{6})$, there exists a unit vector $v \in \ell^2(X)$ with $\diam(\supp v) \leq s$, and satisfying $\|bv\| \geq c$.
  \item $X$ has the operator norm localisation property.
\end{enumerate}
\end{prop}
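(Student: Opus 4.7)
The plan is to establish the equivalences by reducing to the classical trio (1)$\Leftrightarrow$(2)$\Leftrightarrow$(5), which is already available in \cite{brodzki2013uniform, chen2008metric, sako2014property}, and then closing the implication cycle (2)$\Rightarrow$(3)$\Rightarrow$(4)$\Rightarrow$(5). Each arrow in this cycle is short, and the conceptual content has already been packaged into Lemma \ref{ONL} and Lemma \ref{bd to commut}; what remains is essentially bookkeeping with constants.

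For (2)$\Rightarrow$(3), I would simply invoke Lemma \ref{ONL} in the specialisation $p=2$, $E=\mathbb{C}$, with the parameter $\varepsilon$ there replaced by $\varepsilon/6$. The hypothesis $b \in \Commut(L,\varepsilon/6)$ and the conclusion $\|bv\| \geq \|b\| - 6(\varepsilon/6) = \|b\| - \varepsilon$ then match item (3) verbatim. For (3)$\Rightarrow$(4), given $c \in (0,1)$ and $L>0$, I would feed (3) with the choice $\varepsilon := 1-c$ and $M := 1$. The resulting scale $s$ then has the property that any $b \in \B(\ell^2(X))$ with $\|b\|=1$ and $b \in \Commut(L,(1-c)/6)$ admits a unit vector $v$ with $\diam(\supp v) \leq s$ and $\|bv\| \geq 1-(1-c) = c$, which is exactly (4).

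For (4)$\Rightarrow$(5), fix $c \in (0,1)$ and $R>0$. Apply Lemma \ref{bd to commut} with tolerance $(1-c)/6$ and radius $R$ to obtain $L>0$ such that every operator $a \in \B(\ell^2(X))$ of norm $1$ and propagation at most $R$ lies in $\Commut(L,(1-c)/6)$. Plugging this $L$ together with $c$ into (4) produces $s>0$; setting $S := s$ gives the operator norm localisation property, since the unit vector $v$ supplied by (4) satisfies $\diam(\supp v) \leq S$ and $\|av\| \geq c$.

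There is no real obstacle here beyond keeping the constants aligned; the genuinely hard work sits inside Lemma \ref{ONL} (which, via the metric sparsification property and Proposition \ref{estimate for commut}, captures the non-trivial geometric content) and Lemma \ref{bd to commut} (which translates finite propagation into a quantitative commutator estimate via the decomposition of band operators from Lemma \ref{dec of BO}). The only mildly delicate point is making sure the factor $6$ appearing in the conclusion of Lemma \ref{ONL} is absorbed consistently into the statements of (3) and (4) --- choosing $\varepsilon/6$ and $(1-c)/6$ respectively in the commutator hypotheses achieves this cleanly.
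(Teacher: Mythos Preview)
Your proposal is correct and matches the paper's own proof essentially verbatim: the paper also cites \cite{brodzki2013uniform} and \cite{sako2014property} for (1)$\Rightarrow$(2) and (5)$\Rightarrow$(1), invokes Lemma \ref{ONL} (with $p=2$, $E=\mathbb{C}$) for (2)$\Rightarrow$(3), specialises $\varepsilon=1-c$, $M=1$ for (3)$\Rightarrow$(4), and uses Lemma \ref{bd to commut} with tolerance $(1-c)/6$ for (4)$\Rightarrow$(5). The constant-tracking you flag as the only delicate point is handled identically.
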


\begin{proof}
``(1) $\Rightarrow$ (2)" follows from \cite[Proposition 3.2 and 3.8]{brodzki2013uniform}. ``(2) $\Rightarrow$ (3)" follows from Lemma \ref{ONL} in the case of $p=2$ and $E=\mathbb C$. Let us start with ``(3) $\Rightarrow$ (4)". Fix $c \in (0,1)$ and $L>0$. Taking $\varepsilon=1-c>0$ and $M=1$, there exists $s>0$ satisfying the property in condition (3). Hence for any $b\in \Commut(L, \frac{1-c}{6})=\Commut(L,\frac{\varepsilon}{6})$ with norm $1$, there exists a unit vector $v \in \ell^2(X)$ with $\diam(\supp v) \leq s$ and satisfying $\|bv\| \geq 1-\varepsilon=1-(1-c)=c$.

Now we prove ``(4) $\Rightarrow$ (5)". Given $c \in (0,1)$ and $R>0$, applying Lemma \ref{bd to commut} to $\varepsilon=\frac{1-c}{6}$ and $R$ produces a constant $L>0$ such that for any $b\in \B(\ell^2(X))$ with propagation at most $R$ and norm $1$, $b \in \Commut(L,\frac{1-c}{6})$. Applying condition (4) to the above $c$ and $L$, we obtain a constant $s$ such that for the above $b$, there exists a unit vector $v \in \ell^2(X)$ with $\diam(\supp v) \leq s$ and satisfying $\|bv\| \geq c$. Hence $X$ has operator norm localisation property.

Finally, ``(5) $\Rightarrow$ (1)" is proved in \cite[Theorem 4.1]{sako2014property}.
\end{proof}

\subsection{Inverses in $\ell^p$-Roe-like algebras}

Recall that unital $C^*$-algebras are always closed under taking inverses. More precisely, for any two unital $C^*$-algebras $A \subseteq B$, if an element $a\in A$ is invertible in $B$, then $a^{-1} \in A$. This does not hold generally for Banach algebras: The classical example is the disk algebra $A(\mathbb{D})$ (of those analytic functions on the open unit disk $\mathbb{D}\subset\mathbb{C}$ which extend continuously to the boundary circle $S^1$, equipped with the sup-norm) considered as a subalgebra of $C(S^1)$. The identity function $z\mapsto z$ is invertible in $C(S^1)$, but the inverse does not belong to $A(\mathbb{D})$.

Considering an $\ell^p$-Roe-like algebra as a subalgebra of the appropriate $\Blp$, one may ask whether the above property holds. The motivation to study this problem has its roots in the limit operator theory (see \cite{rabinovich2012limit} for the case of $\mathbb{Z}^N$), where this property plays an important role to study the Fredholmness of certain operators.

Using Theorem \ref{char for Roe alg Thm}, we can give an affirmative answer to the above question under some natural assumptions. More generally, we have the following result. The proof is similar to that of Theorem 5.1, ``(i) $\Leftrightarrow$ (ii)" in \cite{zhang2018}.
\begin{prop}
Let $X$ be a metric space with bounded geometry, and $p \in \Dom$. If $p\in (1,\infty)$, assume in addition that $X$ has Property A. Let $E$ be a Banach space and $B \subseteq \Blp$ a Banach subalgebra such that $C_b(X)BC_b(X)=B$ and $B$ is closed under block cutdowns. Let $a \in \Roe$. Then for any $b \in B$ such that $\Id-ab, \Id-ba\in\K$, we have $b\in\Roe$. In particular, if $B$ is closed under taking inverses, then so is $\Roe$.
\end{prop}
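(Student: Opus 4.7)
My plan is to invoke Theorem~\ref{char for Roe alg Thm} to conclude $b \in \Roe$. Since $X, E, B, p$ satisfy the hypotheses there (including Property A when $p \in (1,\infty)$), it suffices to verify one of the equivalent conditions for $b$. I will establish condition (i), which by Lemma~\ref{commut replace} amounts to: for every $\varepsilon > 0$ there exists $L > 0$ with $b \in \Commut(L, \varepsilon)$, i.e.\ $\|[b, h]\| < \varepsilon$ for all $L$-Lipschitz $h \in C_b(X)_1$.

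Set $k_1 := \Id - ba$ and $k_2 := \Id - ab$, both in $\K$. From the resulting decompositions $b = bab + bk_2 = bab + k_1 b$, a direct Leibniz computation (together with $ab + k_2 = ba + k_1 = \Id$) yields, for any $h \in C_b(X)$,
\[
[b, h] \;=\; [b, h]\, k_2 \;-\; b\,[a, h]\, b \;-\; [k_1, h]\, b.
\]
For $h$ sufficiently Lipschitz, the terms $\|b\,[a, h]\, b\| \leq \|b\|^2\,\|[a, h]\|$ and $\|[k_1, h]\, b\| \leq \|[k_1, h]\|\,\|b\|$ can both be made arbitrarily small, since $a \in \Roe$ implies $a \in \Commut(L_0, \delta)$ for any $\delta > 0$ and some $L_0$ (by Theorem~\ref{char for Roe alg Thm} and Lemma~\ref{commut replace}), and the same reasoning applies to $k_1 \in \K \subseteq \Roe$.

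The main obstacle is handling $\|[b, h]\, k_2\|$: the naive bound $\|[b, h]\|\cdot\|k_2\|$ is circular and would close the argument only if $\|k_2\| < 1$, which cannot be assumed. To sidestep this, apply the Leibniz rule once more to get
\[
[b, h]\, k_2 \;=\; b\,[h, k_2] \;-\; [h,\, b k_2].
\]
The first summand is small by quasi-locality of $k_2 \in \K \subseteq \Roe$. For the second, I claim $b k_2 \in \Roe$ by verifying it is quasi-local: indeed $bk_2 = k_1 b = b - bab$, and for any $\eta > 0$ one approximates $k_1, k_2$ in norm by operators of the form $P_F k_i P_F$ for a large finite $F \subseteq X$ (using $\K = \overline{C_0(X) B C_0(X)}$). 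Choosing $d(\supp f, \supp f') \gg \diam F$ forces at least one of $\supp f, \supp f'$ to be disjoint from $F$; in the former case the representation $bk_2 = k_1 b$ together with $\|f\chi_F\| = 0$ makes $\|f(bk_2)f'\|$ small, and in the latter case the representation $bk_2$ (with $\chi_F f' = 0$) does the same. Thus $bk_2$ is quasi-local, hence $bk_2 \in \Roe$ via Theorem~\ref{char for Roe alg Thm}, and therefore $\|[h, b k_2]\|$ can be made arbitrarily small for sufficiently Lipschitz $h$. Combining all three estimates yields $\|[b, h]\| < \varepsilon$ for $L$ sufficiently small, verifying the desired commutator condition and finishing the proof.
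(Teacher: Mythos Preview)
Your argument is correct and takes a genuinely different route from the paper's.

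Both proofs start from the same commutator criterion (Theorem~\ref{char for Roe alg Thm} together with Lemma~\ref{commut replace}) and the same basic algebraic identity coming from $ab+k_2=ba+k_1=\Id$. The paper writes it as
\[
[f,b]=b[a,f]b - b\tilde f K_1 + K_2 \tilde f b,
\]
with $\tilde f=f-f(x_0)$, and then controls the last two terms by choosing a finite set $F_0$ on which $K_1,K_2$ are concentrated and exploiting that the shifted function $\tilde f$ is small on $F_0$ (since $f$ is Lipschitz with small constant). This is a direct, self-contained estimate that uses the main theorem only once.

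You instead manipulate further to obtain
\[
[b,h]=b[h,k_2]-[h,bk_2]-b[a,h]b-[k_1,h]b,
\]
and reduce the problem to showing $bk_2=k_1b\in\Roe$. Your quasi-locality argument for $bk_2$ (switching between the two representations $bk_2$ and $k_1b$ depending on which side avoids the finite support $F$) is valid; in fact it shows slightly more, namely that $bk_2\in\K$, since both $\|(\Id-P_F)(bk_2)\|=\|(\Id-P_F)k_1b\|$ and $\|(bk_2)(\Id-P_F)\|=\|bk_2(\Id-P_F)\|$ tend to zero. Either way you then invoke Theorem~\ref{char for Roe alg Thm} a second time to get the commutator with $bk_2$ small.

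The trade-off: the paper's constant-shift trick is more elementary and avoids the bootstrap through the main theorem, while your approach isolates a clean intermediate fact ($bk_2\in\K\subseteq\Roe$) and makes the final estimate a sum of four terms each controlled by quasi-locality of an element already known to lie in $\Roe$.
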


\begin{proof}
By Theorem \ref{char for Roe alg Thm} ``(iv) $\Rightarrow$ (i)", $[a,f]=0$ for all $f \in \VLinf$. Hence from Lemma \ref{commut replace}, for any $\epsilon>0$, there exists some $L_1>0$ such that $\|[a,f]\|<\epsilon/(3\|b\|^2)$ for any $L_1$-Lipschitz function $f \in C_b(X)_1$. On the other hand, since $K_1:=\Id-ab$ and $K_2:=\Id-ba$ are in $\K$, there exists some finite subset $F_0 \subseteq X$ such that
\begin{equation}\label{EQ18}
\|(\Id-P_{F_0})K_1\| < \frac{\epsilon}{12\|b\|} \andx \|K_2(\Id-P_{F_0})\| < \frac{\epsilon}{12\|b\|}.
\end{equation}
Choose a point $x_0 \in F_0$, and take $L_2=\epsilon/[6\mathrm{diam}F_0\cdot\|b\|\cdot(\|K_1\|+\|K_2\|)]$, $L:=\min\{L_1,L_2\}$.

For any $L$-Lipschitz function $f \in C_b(X)_1$, we take $\tilde{f}=f-f(x_0)$.  Then
\begin{equation}\label{EQ11}
[f,b]=[\tilde{f},b]=(ba+K_2)\tilde{f}b-b\tilde{f}(ab+K_1)=b[a,f]b-b\tilde{f}K_1+K_2\tilde{f}b.
\end{equation}
For the above $F_0 \subseteq X$, we have
\begin{equation}\label{EQ12}
b\tilde{f}K_1=b\tilde{f}(\Id-P_{F_0})K_1+bP_{F_0}\tilde{f}K_1, \andx K_2\tilde{f}b=K_2(\Id-P_{F_0})\tilde{f}b+K_2P_{F_0}\tilde{f}b.
\end{equation}
By (\ref{EQ18}), we obtain
\begin{equation}\label{EQ13}
\|b\tilde{f}(\Id-P_{F_0})K_1\| < \epsilon/6 \andx \|K_2(\Id-P_{F_0})\tilde{f}b\| < \epsilon/6.
\end{equation}
Furthermore, since $x_0 \in F_0$, we have
\begin{equation*}
\|P_{F_0}\tilde{f}\|\leq L_2\cdot\mathrm{diam}F_0=\frac{\epsilon}{6\|b\|\cdot(\|K_1\|+\|K_2\|)},
\end{equation*}
which implies that
\begin{equation}\label{EQ14}
\|bP_{F_0}\tilde{f}K_1\|\leq \epsilon/6, \andx \|K_2P_{F_0}\tilde{f}b\| \leq \epsilon/6.
\end{equation}
Hence combining (\ref{EQ11}), (\ref{EQ12}), (\ref{EQ13}) and (\ref{EQ14}), we have:
\begin{eqnarray*}
  \|[f,b]\| &\leq & \|b[a,f]b\| + \|b\tilde{f}K_1\| + \|K_2\tilde{f}b\| \\
   &\leq & \|b\|\cdot \frac{\epsilon}{3\|b\|^2} \cdot \|b\| + \|b\tilde{f}(\Id-P_{F_0})K_1\| + \|bP_{F_0}\tilde{f}K_1\|\\[0.3cm]
   && + \|K_2(\Id-P_{F_0})\tilde{f}b\| + \|K_2P_{F_0}\tilde{f}b\| \\[0.3cm]
   & \leq & \frac{\epsilon}{3} + \frac{\epsilon}{6} + \frac{\epsilon}{6} + \frac{\epsilon}{6} + \frac{\epsilon}{6} = \epsilon.
\end{eqnarray*}
By Lemma \ref{commut replace} again, we have that $[b,f]=0$ for all $f \in \VLinf$. Finally, applying Theorem \ref{char for Roe alg Thm} ``(i) $\Rightarrow$ (iv)", $b \in \Roe$.
\end{proof}

Consequently, we have the following corollary from Examples \ref{uniform Roe alg} and \ref{BD op}.
\begin{cor}
For the above $X$ and $p \in \Dom$, the uniform Roe algebra $B^p_u(X)$ and the band-dominated operator algebra $\mathcal{A}^p_E(X)$ are closed under taking inverses.
\end{cor}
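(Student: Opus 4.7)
The plan is to invoke the preceding Proposition directly for the two concrete $\ell^p$-Roe-like algebras considered in Examples \ref{uniform Roe alg} and \ref{BD op}. That Proposition reduces the inverse-closedness of $\Roe$ inside $\Blp$ to the inverse-closedness of the ambient Banach subalgebra $B$ (together with the structural assumptions $C_b(X)BC_b(X)=B$, closure under block cutdowns, and Property A when $p\in(1,\infty)$, all of which are already verified for these two examples in Section \ref{sec:preliminaries}). So the only thing left to do is to check that, for each example, the ambient algebra $B$ is closed under taking inverses in $\Blp$.

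For both examples this is essentially by construction. In Example \ref{uniform Roe alg} we have $E=\mathbb{C}$ and $B=\mathfrak{B}(\ell^p(X))$, and in Example \ref{BD op} we have $B=\mathfrak{B}(\lp)$. In either case $B$ is the \emph{entire} algebra of bounded operators on the relevant Banach space, so inverse-closedness is trivial: if $a\in B$ is invertible as an element of $\Blp$, then $a^{-1}\in\Blp=B$ tautologically. Therefore the hypotheses of the Proposition are satisfied, and I would conclude that $B^p_u(X)=\Roe$ and $\mathcal{A}^p_E(X)=\Roe$ are inverse-closed inside the appropriate $\Blp$.

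I do not expect any genuine obstacle in this step: all of the analytic weight is already carried by the Proposition, which itself is a direct consequence of the equivalence (i)$\Leftrightarrow$(iv) in Theorem \ref{char for Roe alg Thm}. The only thing worth being careful about is to make sure each of the two examples is cited with the correct setup (in particular, that the identification of $\Roe$ with $B^p_u(X)$, respectively $\mathcal{A}^p_E(X)$, is stated there, and that the relevant ambient $B$ is indeed the full bounded-operator algebra so that inverse-closedness of $B$ is automatic).
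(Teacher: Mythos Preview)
Your proposal is correct and matches the paper's approach exactly: the corollary is stated as an immediate consequence of the preceding Proposition applied to Examples \ref{uniform Roe alg} and \ref{BD op}, and in both examples the ambient algebra $B$ is all of $\Blp$, so its inverse-closedness is tautological. One small wording issue: Property A is not ``verified for these examples'' but is a standing hypothesis on $X$ carried over from the Proposition (as indicated by ``for the above $X$'' in the corollary); otherwise nothing is missing.
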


\subsection{Questions}
We close the paper with two natural questions arising from the results in this paper.

First, note that our characterisation of Property A (Proposition \ref{prop:charact-of-A}, \eqref{item:char-of-A-eps} and \eqref{item:char-of-A-c}) relies on the result of Sako \cite{sako2014property} to get from the operator norm localisation property back to Property A. This result relies on positivity in an essential way, so this argument only work in the Hilbert space case, i.e. when $p=2$. However, Lemma \ref{ONL} works for any $p\in(1,\infty)$.
\begin{question}
Is it possible to generalise the characterisation of Property A (Proposition \ref{prop:charact-of-A}) to any $p\in[1,\infty)$?
\end{question}

Second, in the case of $p \in (1,\infty)$ or especially when $p=2$, is the assumption of Property A really necessary for all the quasi-local operators to be approximable by operators with finite propagation? In other words:
\begin{question}
Is there an example of a space $X$ with bounded geometry and a quasi-local operator on (say) $\B(\ell^2(X))$ which does not belong to the uniform Roe algebra $B^2_u(X)$ (usually denoted $C^*_u(X)$ in the literature)?
\end{question}
%One possible route to answer this question is to consider a space $X$ of the form of a coarse disjoint union of finite spaces $X=\sqcup_n X_n$ (e.g. a box space of a residually finite group) and consider the operator block diagonal operator $p=(p_n)$, where $p_n\in\B(\ell^2(X_n))$ is the rank-one projection onto the subspace of constant functions in $\ell^2(X_n)$. (Another description of this projection in the case when $X_n$ are graphs is that it is the spectral projection corresponding to $\{0\}$ of the combinatorial (or graph) Laplacian; or when $X$ is a box space of residually finite group with Property (T), it is simply the image of the Kazhdan projection.) It can be shown that $p$ is quasi-local in $\B(\ell^2(X))$ if and only if
%$$
%\lim_{R\to\infty}\sup_{n\in\mathbb{N}}\sup_{\genfrac{}{}{0pt}{}{A,B\subseteq X_n}{d(A,B)\geq R}}\frac{|A||B|}{|X_n|^2}
%$$
%exists and equals zero. This holds, by a straighforward argument, whenever $X$ is an expander; however in that case the projection $p$ actually belongs already to $C^*_u(X)$.

\bibliographystyle{plain}
\bibliography{bibQL}

\end{document}